\numberwithin{equation}{section}
\theoremstyle{plain}
\newtheorem{theorem}[subsection]{Theorem}
\newtheorem{proposition}[subsection]{Proposition}
\newtheorem{lemma}[subsection]{Lemma}
\theoremstyle{definition}
\newtheorem{definition}[subsection]{Definition}
\newcommand\ds{\displaystyle}
\newcommand{\Z}{\mathbb Z}
\newcommand{\C}{\mathbb C}
\newcommand{\N}{\mathbb N}
\newcommand{\R}{\mathbb{R}}
\newcommand{\odd}{{\rm odd}}
\newcommand{\ep}{\varepsilon}
\newcommand{\CC}{\mathcal{C}}
\newcommand\psit{{\widetilde {\psi}}}
\title[Local $L^2$-regularity of Riemann's  Fourier  series]{Local $L^2$-regularity of \\Riemann's  Fourier  series}
\author[St\'ephane Seuret]{St\'ephane Seuret$^\ast$}
\address {St\'ephane SEURET, Universit\'e Paris-Est, LAMA (UMR 8050), UPEMLV, UPEC, CNRS, F-94010, Cr\'eteil, France}
\email{seuret@u-pec.fr}
\author{Adri\'an UBIS}
\address{Adri\'an UBIS, Departamento de Matem\'aticas, Facultad de Ciencias, Universidad Aut\'onoma de Madrid, 28049 Madrid, Spain}
\email{adrian.ubis@uam.es}
\thanks{$^\ast$ Research partially supported by the ANR project MUTADIS, ANR-11-JS01-0009.  }
\begin{document}

 \begin{abstract}
 We are interested in the convergence and the local regularity  of the lacunary Fourier series $F_s(x) = \sum_{n=1}^{+\infty} \frac{e^{2i\pi n^2 x}}{n^s}$. In the 1850's, Riemann introduced the series $F_2$ as a possible example of nowhere differentiable function, and the study of this function has  drawn the interest of many mathematicians since then. We focus on the case when $1/2<s\leq 1$, and we prove that $F_s(x)$ converges when $x$ satisfies  a  Diophantine condition. We also study the $L^2$- local regularity of $F_s$, proving that the local $L^2$-norms have different behaviors around different $x$, according again to Diophantine conditions on $x$.
 \end{abstract}
  
\maketitle

%%%%%%%%%%%%%%%%%%%%%%%%%%%%%%%%%%%%%%%%%%%%%%%%%%
%%%%%%%%%%%%%%%%%%%%%%%%%%%%%%%%%%%%%%%%%%%%%%%%%%
%%%%%%%%%%%%%%%%%%%%%%%%%%%%%%%%%%%%%%%%%%%%%%%%%%
%%%%%%%%%%%%%%%%%%%%%%%%%%%%%%%%%%%%%%%%%%%%%%%%%%
%%%%%%%%%%%%%%%%%%%%%%%%%%%%%%%%%%%%%%%%%%%%%%%%%%
%%%%%%%%%%%%%%%%%%%%%%%%%%%%%%%%%%%%%%%%%%%%%%%%%%
\section{Introduction}

Riemann introduced in 1857 the Fourier series
$$R(x) = \sum_{n =1}^{+\infty} \frac{\sin(2\pi n^2  x)}{n^2}$$
as a possible example of continuous but nowhere differentiable function. Though it is not the case ($R$ is differentiable at rationals $p/q$ where $p$ and $q$ are both odd \cite{gerver}), the study of this function has, mainly because of its connections with  several domains: complex 
analysis, harmonic analysis, Diophantine approximation, and dynamical systems \cite{Hardy,HL,gerver,Dui,Itatsu,jaffard} and more recently \cite{chamizo_ubis_gaps,chamizo_ubis_polynomial,rivoal_seuret}.

In this article, we study the local regularity of the series
\begin{equation}
\label{defFs}
F_s(x) = \sum_{n=1}^{+\infty} \frac{e^{2i\pi n^2 x}}{n^s}
\end{equation}
when $s\in (1/2,1)$. In this case, several questions arise before considering its local behavior. First it does not converge everywhere, hence one needs to characterize its set of convergence points{\color{red};} this question was studied in \cite{rivoal_seuret}, and we will first  find a slightly more precise characterization. Then, if one wants to characterize the local regularity of a (real) function, one classically studies the  pointwise H\"older exponent defined for a locally bounded function $f:\R\to\R$ at a point $x$ by using the functional spaces  $C^\alpha (x)$:   $f\in C^\alpha(x)$ when there exist a constant $C$ and a polynomial $P$ with degree less than $\lfloor\alpha\rfloor$ such that, locally around $x$ (i.e. for small $H$), one has
$$  \big\|(f(\cdot)-P(\cdot-x) ) {\bf 1\!\!1}_{B(x,H)}\|_{\infty}:= \sup( |f(y)-P(y-x)| : y\in B(x,H)) \leq C H^\alpha,$$
where $B(x,H) = \{y\in \R: |y-x|\leq H\}$. Unfortunately these spaces are not appropriate for our context since $F_s$ is nowhere locally bounded (for instance, it diverges at every irreducible rational $p/q$ such that $q\neq 2\times $odd).

Following Calderon and Zygmund in their study of local behaviors of solutions of elliptic PDE's  \cite{CZ}, it is natural to introduce in this case the pointwise $L^2$-exponent defined as follows.

\begin{definition}
Let $f:\R \to \R$ be a function belonging to $L^2 (\R)$, $\alpha \geq 0$ and $x\in \R$. The function $f$ is said to belong to $C^\alpha_2(x)$ if there exist a constant $C$ and a polynomial $P$  with degree less than $\lfloor\alpha\rfloor$ such that, locally around $x$ (i.e. for small $H>0$), one has
$$ \left(  \frac{1}{H} \int_{B(x,H)} \big|f(h)-P(h-x \big)| ^2dh  \right) ^{1/2}\leq C H^\alpha.$$
Then, the pointwise $L^2$-exponent of $f$ at $x$ is
$$\alpha_f(x) = \sup\big\{\alpha \in \R: f\in C^\alpha_2(x) \big\}.$$
\end{definition}

This definition makes sense for the series $F_s$ when $s\in (1/2,1)$, and are based on a natural generalizations of the spaces $C^\alpha(x)$ be replacing the $L^\infty$ norm by the $L^2$ norm.    The pointwise $L^2$-exponent has been studied for instance in \cite{JM}, and is always greater than -1/2 as soon as $f\in L^2 $.

\medskip

Our goal is to perform the multifractal analysis of the series $F_s$. In other words, we aim at computing the Hausdorff dimension, denoted by $\dim$ in the following,  of the level sets of the pointwise $L^2$-exponents.

\begin{definition}
Let $f:\R \to \R$ be a function belonging to $L^2 (\R)$. The $L^2$-multifractal spectrum $d_f : \R^+ \cup\{+\infty\} \to \R^+\cup \{-\infty\}$ of $f$ is the mapping 
$$ d_f( \alpha) := \dim E_f(\alpha),$$
where the iso-H\"older set $E_f(\alpha) $ is
$$E_f(\alpha) := \{x\in \R: \alpha_f(x) =\alpha\} .$$ 
\end{definition}
By convention one sets $d_f(\alpha) = -\infty$ if $E_f( \alpha) =\emptyset$.

\medskip

Performing the multifractal analysis consists in computing its $L^2$-{multifractal} spectrum. This provides us with a very precise description of the distribution of the local $L^2$-singularities of $f$.
In order to state our result, we need to introduce some notations.

\begin{definition}
Let $x$ be an irrational number, with convergents $(p_j/q_j)_{j\ge 1}$. Let us define
\begin{equation}  
\label{defconvergents}
 x-\frac{p_j}{q_j}= h_j, \qquad |h_j|=q_j^{-r_j}
\end{equation}
with $2\le r_j <\infty$. Then  the approximation rate of $x$  is defined by
\[
 r_{\odd}(x)=\overline{\lim} \{r_j: q_j\neq 2*\odd \}.
\]
\end{definition}
This definition always makes sense because if $q_j$ is even, then $q_{j+1}$ and $q_{j-1}$ must be odd (so cannot be equal to $2*\odd$). Thus, we always have $2\le r_{\odd}(x) \le +\infty$. It is classical  that one can compute the Hausdorff dimension of the set of points with  the Hausdorff dimension of the points $x$ with a given approximation rate $r\geq 2$:
\begin{equation}\label{jarnik}
  \mbox{for all } r\geq 2, \ \ \ \dim \{x\in \R: r_{\odd}(x)=r\} = \frac{2}{r}.
\end{equation}

When  $s>1$, the series  $F_s$ converges, and the multifractal spectrum of $ F_s$ was computed by S. Jaffard in \cite{jaffard}. For instance, for the classical Riemann's series $F_2$, one has
$$d_{F_2}(\alpha) = \begin{cases}  \ 4\alpha-2 & \mbox { if } \alpha\in [1/2,3/4], \\  \ \ \ 0 & \mbox { if } \alpha=3/2, \\  \ -\infty & \mbox { otherwise.}
\end{cases}
$$

Here our aim is to somehow extend this result to the range $1/2<s\le 1$.  The convergence of the series $F_s$ is described by our first theorem.

\begin{theorem}\label{convergence}
Let $s\in (1/2,1]$, and let $x\in (0,1)$ with convergents $(p_j/q_j)_{j\ge 1}$. We set for every $j\geq 1$
$$\delta_j= \begin{cases} \ \ \ \  \ \  1 & \mbox{ if }s\in(1/2,1)\\  \ \log (q_{j+1}/q_j) & \mbox{ if } s=1,\end{cases}$$
and 
\begin{equation}
\label{S<+infty}
\Sigma_s(x)= \sum_{j: \, q_j\neq 2*\odd} \delta_j\sqrt{\frac{q_{j+1}}{(q_jq_{j+1})^{s}}}.
\end{equation}

\begin{enumerate}
\item
$F_s(x)$ converges whenever $\frac{s-1+1/r_{\odd}(x)}{2} >0$. In fact, it converges whenever $\Sigma_s(x)<+\infty$.
 \item
$F_s(x)$ does not converge if $\frac{s-1+1/r_{\odd}(x)}{2}<0$. In fact, it does not converge whenever
\[
 \overline{\lim}_{j:\, q_j\neq 2*\odd} \,\,  \delta_j\sqrt{\frac{q_{j+1}}{(q_jq_{j+1})^{s}}} >0,
\]
\end{enumerate}
\end{theorem}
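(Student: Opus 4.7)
The plan is to reduce the convergence question to the behavior of the Weyl sums $S_N(x) := \sum_{n=1}^N e^{2\pi i n^2 x}$ and to analyze those level by level using the convergents of $x$. First, summation by parts rewrites the partial sums of $F_s$ as
\[
\sum_{n=M}^N \frac{e^{2\pi i n^2 x}}{n^s} = \frac{S_N(x)}{N^s} - \frac{S_{M-1}(x)}{M^s} + s\int_M^N \frac{S_{\lfloor t \rfloor}(x)}{t^{s+1}}\, dt,
\]
so Cauchy convergence of the partial sums of $F_s(x)$ follows from a sharp enough estimate on $|S_N(x)|/N^s$, and conversely the failure of convergence can be exhibited by producing two $N$'s realising a large difference in the right-hand side.

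The central input is a Poisson summation expansion around each convergent. Writing $x = p_j/q_j + \eta_j$ with $|\eta_j| \asymp 1/(q_j q_{j+1})$, I would use the identity
\[
S_N(x) \approx \frac{G(p_j,q_j)}{q_j}\sum_{m\in\Z}\int_0^N e^{2\pi i(\eta_j t^2 + mt/q_j)}\, dt,
\]
where $G(p,q) = \sum_{a=0}^{q-1} e^{2\pi i pa^2/q}$. Two classical facts drive the estimate: the Gauss sum satisfies $|G(p_j,q_j)| \asymp \sqrt{q_j}$ when $q_j \neq 2*\odd$ and vanishes otherwise, and stationary phase gives each integral a size of order $|\eta_j|^{-1/2} = \sqrt{q_j q_{j+1}}$ when the stationary point $t_m^* = -m/(2q_j\eta_j)$ lies in $[0,N]$. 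Balancing these contributions yields the target estimate $|S_N(x)| \lesssim \sqrt{q_{j+1}}$ on the window of $N$'s on which $p_j/q_j$ is the correct convergent to use (and a negligible estimate when $q_j$ is $2*\odd$).

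Feeding this into the Abel identity, the contribution of level $j$ is of order $\sqrt{q_{j+1}}/(\sqrt{q_j q_{j+1}})^s = \sqrt{q_{j+1}/(q_jq_{j+1})^s}$ for $s\in(1/2,1)$; for $s=1$, the integral $\int dt/t$ over the level-$j$ window produces the extra $\delta_j = \log(q_{j+1}/q_j)$ factor appearing in the statement. This proves (i) under the assumption $\Sigma_s(x)<+\infty$, and the Diophantine hypothesis $(s-1+1/r_\odd(x))/2>0$ implies $q_{j+1} \leq q_j^{r_\odd(x)+o(1)}$ along $q_j \neq 2*\odd$, which forces the summand to decay geometrically and thus be absolutely summable. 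For (ii), I would run the stationary-phase analysis in reverse: if $\delta_j\sqrt{q_{j+1}/(q_jq_{j+1})^s}$ does not tend to zero along a subsequence with $q_j \neq 2*\odd$, the leading term in the Poisson expansion survives, and two carefully chosen partial sums straddling the stationary point differ by an amount bounded below by this quantity, so Cauchy fails.

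The hard part will be rigorously separating the levels. The Poisson expansion around $p_j/q_j$ is only meaningful when $|\eta_j| n^2$ is not too large, which forces a careful choice of the transition between levels (essentially at $n \asymp \sqrt{q_j q_{j+1}}$). One must also deal with the $m=0$ term in the Poisson expansion, which produces a deterministic piece of size $\asymp N/\sqrt{q_j}$ that can mimic the next convergent's contribution and has to be either cancelled or absorbed. Finally, verifying that the cumulative error terms from the stationary-phase approximation across all levels remain absolutely summable, both inside the Abel integral in (i) and in the lower bound needed for (ii), is where the main technical work will concentrate.
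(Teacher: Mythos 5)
Your plan is sound and would prove the theorem, but it routes the argument differently from the paper. You apply Abel summation first, reducing everything to the unweighted Weyl sums $S_N(x)=\sum_{n\le N}e^{2i\pi n^2x}$, and then use an approximate functional equation for $S_N$ near each convergent; the paper instead carries the weight $n^{-s}$ through the Poisson analysis, proving a summation formula directly for the dyadic blocks $F_{s,2N}-F_{s,N}$ near $p_j/q_j$ (Proposition \ref{summation}), whose main term is $\frac{\theta_0}{\sqrt{q_j}}\int_N^{2N}t^{-s}e^{2i\pi h_jt^2}dt$, whose dual sum $G_{s,N}$ is empty for $N<q_{j+1}/4$, and whose error $O(N^{1/2-s}\log q_j)$ is summable over dyadic $N$ precisely because $s>1/2$. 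The engine is the same in both routes: Gauss sums vanishing iff $q_j=2\times\odd$, stationary phase, the window $q_j\lesssim N\lesssim q_{j+1}$ on which only the $m=0$ mode survives, and non-oscillation of the main term on $[\ep q_j,\ep\sqrt{q_jq_{j+1}}]$ for the divergence part. What your route buys is the ability to quote classical theta-sum estimates; what it costs is exactly the difficulty you flag at the end: Poisson summation against the sharp cutoff of $[1,N]$ yields $m\neq 0$ endpoint contributions of size $\asymp \sqrt{q}/|m|$, so the dual sum is only conditionally convergent, and you must either smooth the cutoff (this is what the paper's decomposition \eqref{linear} into the bumps $\eta^k$ accomplishes) or invoke a version of the approximate functional equation with a fully quantified error such as $O(\sqrt{q_j}\log q_j+N^{1/2})$, which remains harmless after division by $t^{s+1}$ since the $q_j$ grow geometrically. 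Two smaller points: the Gauss sum attached to mode $m$ is $\sum_b e^{2i\pi(pb^2+mb)/q}$ and depends on $m$ (only its uniform bound $\sqrt{2q}$ and its value at $m=0$ matter); and to obtain the level-$j$ contribution $\delta_j\sqrt{q_{j+1}/(q_jq_{j+1})^{s}}$ rather than the weaker $\sqrt{q_{j+1}}/q_j^{s}$ you must use the two-regime bound $|S_N|\ll q_j^{-1/2}\min(N,\sqrt{q_jq_{j+1}})$ inside the Abel integral, not just the saturated value $\sqrt{q_{j+1}}$ --- your remark about the transition at $n\asymp\sqrt{q_jq_{j+1}}$ indicates you are aware of this.
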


In the same way we could extend this results to rational points $x=p/q$, by proving that $F_s(x)$ converges for $q\neq 2*\odd$ and does not for $q\neq 2*\odd$.  Observe that the convergence of $\Sigma_s(x)$ implies that $r_{odd}(x) \leq \frac{1}{1-s}$. Our result asserts that $F_s(x)$ converges as soon as $r_{\odd}(x) < \frac{1}{1-s}$, and also when $r_{\odd}(x) = \frac{1}{1-s}$ when $\Sigma_s(x)<+\infty$.

%%%%%%%%%%%%%%%%%%%%%%%%%%%%%%%%%% 
\begin{center}
\begin{figure}
  \includegraphics[width=6.5cm,height = 5.0cm]{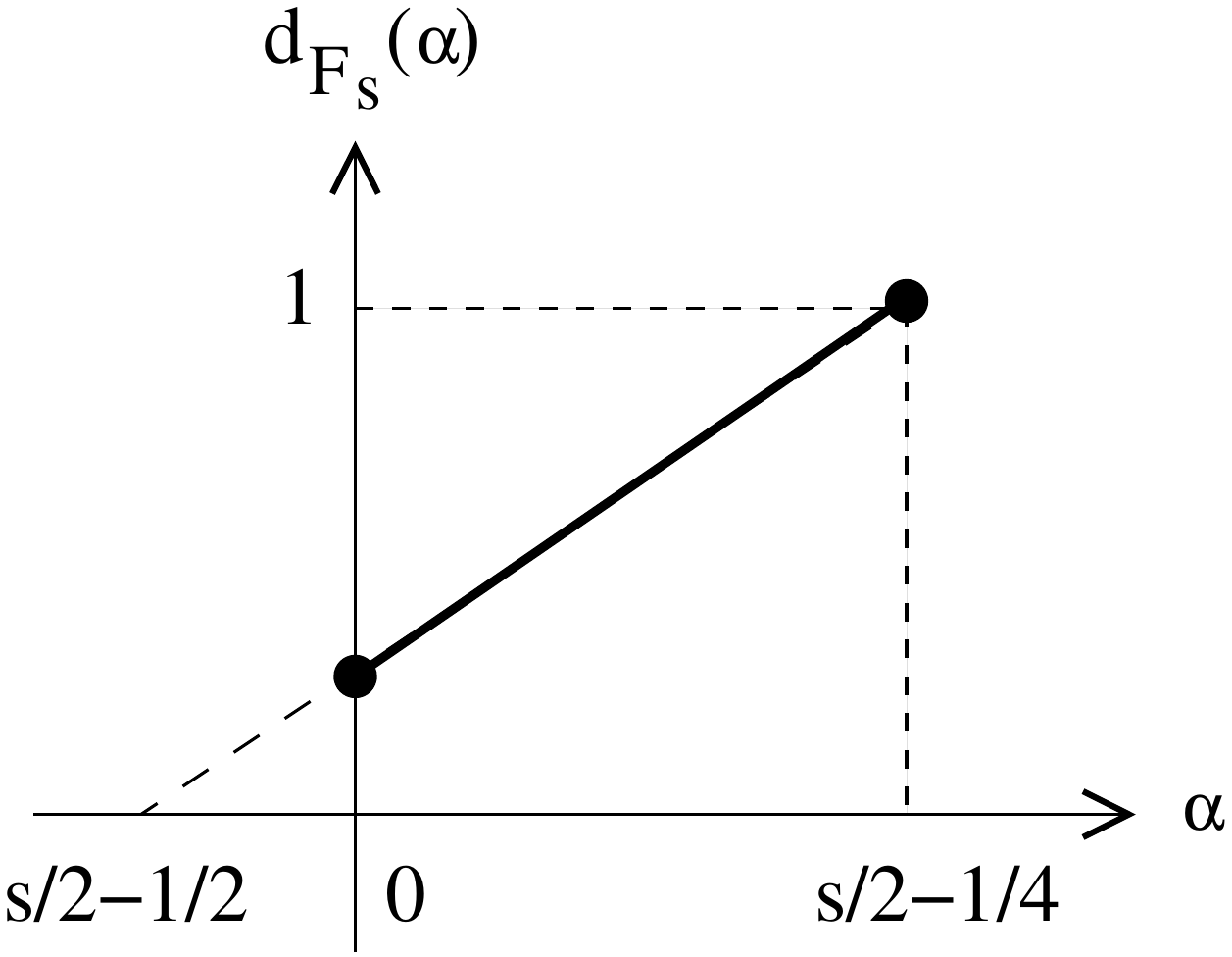} 
\caption{$L^2$-multifractal spectrum of $F_s$ } \label{fig1}
 \end{figure}
\end{center}
%%%%%%%%%%%%%%%%%%%%%%%%%%%%%%%%%% 
 
Jaffard's result is then extended in the following sense:
%%%%%%%%%%%%%%%%%%%%%%%%%%%%%%%%%% 
\begin{theorem}
\label{smoothness}
Let $s\in (1/2,1]$.
\begin{enumerate}
\item
For every  $x$  such that $\Sigma_s (x)< +\infty$, one has $\ds \alpha_{F_s}(x)  = \frac{s-1+1/r_{\odd}(x)}{2}$.
\item
For every $\alpha\in [0,  s/2-1/4]$
$$d_{F_s}(\alpha) = 4\alpha +2 - 2s.$$ 
\end{enumerate}
\end{theorem}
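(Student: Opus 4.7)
My plan is to first establish Part (i) via a local renormalization identity for $F_s$ around rationals combined with a continued fraction iteration, and then deduce Part (ii) from (i) using the Jarnik-type formula \eqref{jarnik}. The starting point for Part (i) is the analog for $1/2 < s \le 1$ of the classical Hardy--Littlewood/Itatsu/Duistermaat expansion used by Jaffard in the case $s > 1$. Splitting the sum defining $F_s(p/q + h)$ into arithmetic progressions modulo $q$ and applying Poisson summation in each (equivalently, exploiting the modular transformation of the series $\Theta_s(z) = \sum_n e^{2\pi i n^2 z}/n^s$ on the upper half plane and passing to the boundary) should yield an identity of the form
\[
F_s\!\left(\frac{p}{q} + h\right) = \frac{\chi(p, q)}{q^{s - 1/2}}\, F_s(h') + R_{p,q}(h),
\]
valid in an $L^2$-averaged sense, where $h' \asymp q^2 h$ is a diffeomorphic change of variable, $\chi(p, q)$ is a normalized Gauss sum of unit modulus when $q \neq 2 * \odd$ and zero otherwise, and $R_{p,q}$ is a smoother remainder with controlled local $L^2$-norm. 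The scaling $q^{1/2 - s}$ drives all the subsequent exponent arithmetic.

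For an irrational $x$ with convergents $p_j/q_j$ and $h_j = x - p_j/q_j$, $|h_j| = q_j^{-r_j}$, I would then apply the above identity centered at $p_j/q_j$ to relate the local $L^2$-norm of $F_s$ on $B(x, H)$ to a rescaled local $L^2$-norm of $F_s$ in the range of scales $H \in [|h_{j+1}|, |h_j|]$ where $p_j/q_j$ is the dominant rational approximation. Combining this with the assumption $\Sigma_s(x) < +\infty$ (which, by Theorem \ref{convergence}, ensures convergence of $F_s$ at $x$; note also that $\alpha < 1$ throughout the relevant range so the polynomial $P$ in the definition of $C^\alpha_2(x)$ vanishes) and optimizing over the natural scales $H \asymp q_j^{-r_j}$ with $q_j \neq 2 * \odd$, one extracts $\alpha_{F_s}(x) = (s - 1 + 1/r_{\odd}(x))/2$. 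The upper bound comes from isolating the chirp contribution at scales $H \asymp q_{j_k}^{-r_{j_k}}$ along a subsequence $j_k$ with $r_{j_k} \to r_{\odd}(x)$ and $q_{j_k} \neq 2 * \odd$; the matching lower bound follows from the fact that at all other scales, and for all other convergents, the contribution is smaller, which can be read off the summability of $\Sigma_s(x)$.

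Given (i), Part (ii) reduces to a Hausdorff-dimension computation on a level set of $r_{\odd}(\cdot)$. Inverting the exponent formula, $\alpha_{F_s}(x) = \alpha$ corresponds to $r_{\odd}(x) = r := 1/(2\alpha - s + 1)$, and for $\alpha \in [0, s/2 - 1/4]$ one has $r \in [2, 1/(1-s)]$. Applying \eqref{jarnik} directly gives $\dim \{x : r_{\odd}(x) = r\} = 2/r = 4\alpha + 2 - 2s$, which is the claimed formula. I would additionally check that the restriction $q_j \neq 2 * \odd$ does not affect this dimension, using that two consecutive convergent denominators cannot both be of the form $2 * \odd$, so the odd approximation rate agrees with the ordinary approximation rate up to bounded factors in $q$; and handle the boundary case by verifying that the set where $r_{\odd}(x) = 1/(1-s)$ but $\Sigma_s(x) = +\infty$ is of Hausdorff dimension at most $4\alpha + 2 - 2s$.

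The main obstacle is the rigorous derivation of the local renormalization identity in a genuinely $L^2$-averaged sense, together with the matching of upper and lower bounds in Part (i). In the case $s > 1$ treated by Jaffard, $F_s$ is continuous and the whole analysis can be done with $L^\infty$ estimates; here $F_s$ diverges at every rational $p/q$ with $q \neq 2 * \odd$, so the entire argument must take place at the level of averaged $L^2$-norms on balls of radius $\gtrsim |h_j|$. Keeping precise control of the $L^2$-norm of the remainder $R_{p,q}(h)$ uniformly in $(p,q)$ and treating the critical threshold $r_j \to 1/(1-s)$, where $\Sigma_s(x)$ sits on the boundary of convergence, will require the bulk of the technical work.
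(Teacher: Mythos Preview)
Your high-level strategy---Poisson summation in residues mod $q$ producing Gauss sums, a local expansion around each convergent, then reading off the exponent from the continued-fraction data---is the paper's strategy, and your derivation of (ii) from (i) via \eqref{jarnik} is exactly what the paper does. The implementation of (i), however, differs in a structurally important way.

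You aim for an identity $F_s(p/q+h)=\chi(p,q)\,q^{1/2-s}\,F_s(h')+R_{p,q}(h)$ with $h'\asymp q^2h$, to be interpreted ``in an $L^2$-averaged sense'', and then iterated along the convergents. The paper does not pursue such a self-similar relation: as remarked at the start of Section~2, a closed formula for $F_s(p/q+h)$ cannot exist here, and the same divergence issue afflicts $F_s(h')$ on the right. Instead the paper works throughout with the \emph{truncated} sums $F_{s,N}$ and with the \emph{second differences} $F_{s,N}(p/q+2h)-F_{s,N}(p/q+h)$ (Proposition~\ref{summation_substract}). In that formula the dominant term is not a rescaled copy of $F_s$ but the explicit chirp integral $\tfrac{\theta_0}{\sqrt q}\int_0^N t^{-s}\big(e^{4\pi i h t^2}-e^{2\pi i h t^2}\big)\,dt$, whose $L^2(\widetilde\mu_H)$-norm is computed directly (Lemma~\ref{L2_main}); the twisted-$F_s$-like piece $G_{s,N}(h)$ (whose transformed argument is $\sim 1/(q^2h)$, not $q^2h$) is treated as an \emph{error} and shown to have $L^2(\widetilde\mu_H)$-norm $\ll H^{(s-1/2)/2}$ (Proposition~\ref{L2_average}). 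The increment $F_{s,N}(x+h)-F_{s,N}(x)$ is then recovered by the dyadic telescope $\sum_{k\ge1}\big(F_{s,N}(x+2^{1-k}h)-F_{s,N}(x+2^{-k}h)\big)$, applying the formula once at each scale $H2^{-k}$ with the convergent appropriate to that scale; there is no iteration of a renormalization map. The upper bound on $\alpha_{F_s}(x)$ comes from the same formula read in reverse, producing a lower bound for the $L^2(\widetilde\mu_{H_j})$-norm at the specific scales $H_j\asymp|h_j|$.

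In short, you have correctly identified both the toolkit and the obstacle, but the resolution the paper adopts is not to make sense of a renormalization identity for $F_s$ in $L^2$; it is to replace $F_s$ by $F_{s,N}$, replace values by second differences, and identify the singular behaviour with an explicit oscillatory integral rather than with $F_s$ at a transformed point.
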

%%%%%%%%%%%%%%%%%%%%%%%%%%%%%%%%%% 

The second part of Theorem \ref{smoothness} follows directly from the first one. {Indeed, using part (i) of Theorem \ref{smoothness} and (\ref{jarnik}),} one gets
\begin{eqnarray*}
d_{F_s}(\alpha)  & =  &\dim \left \{x: \frac{s-1+1/r_{\odd}(x)}{2}=\alpha \right\}= \dim  \{x: r_{\odd}(x)= (2\alpha+1-s)^{-1}\}\\
& = & \frac{2}{  (2\alpha+1-s)^{-1}} = 4\alpha+2-2s.
\end{eqnarray*}

\medskip

The paper is organized as follows. Section 2 contains some notations and preliminary results.  In Section 3, we obtain other formulations for $F_s$ based on Gauss sums, and we get first estimates on the increments of the partial sums of the series $F_s$. Using these results, we prove Theorem \ref{convergence} in Section 4. Finally, in Section 5, we use the previous estimates to obtain upper and lower bounds for the local $L^2$-means of the series $F_s$, and compute in Section 6 the local $L^2$-regularity exponent of $F_s$ at real numbers $x$ whose Diophantine properties are controlled, {namely we prove Theorem \ref{smoothness}.}

\medskip

{Finally, let us mention that theoretically the $L^2$-exponents  of a function $f\in L^2(\R)$ take values in the range $[-1/2,+\infty]$, so they may have negative values. We believe that this is the case at points $x$ such that $r_\odd(x) >\frac{1}{1-s}$, so that in the end the entire $L^2$-multifractal spectrum of $F_s$ would be $d_{F_s}(\alpha) = 4\alpha +2 - 2s$ for all $\alpha \in [  s/2-1/2,s/2-1/4 ]$.

Another remark is that for a given $s\in (1/2,1)$, there is an optimal $p>2$ such that $F_s$ belongs locally to $L^p$, so that the $p$-exponents (instead of the 2-exponents)  may carry some interesting information about the local behavior of $F_s$.

}

%%%%%%%%%%%%%%%%%%%%%%%%%%%%%%%%%%%%%%%%%%
%%%%%%%%%%%%%%%%%%%%%%%%%%%%%%%%%%%%%%%%%%
%%%%%%%%%%%%%%%%%%%%%%%%%%%%%%%%%%%%%%%%%%
%%%%%%%%%%%%%%%%%%%%%%%%%%%%%%%%%%%%%%%%%%
%%%%%%%%%%%%%%%%%%%%%%%%%%%%%%%%%%%%%%%%%%
%%%%%%%%%%%%%%%%%%%%%%%%%%%%%%%%%%%%%%%%%%
%%%%%%%%%%%%%%%%%%%%%%%%%%%%%%%%%%%%%%%%%%
\section{Notations and first properties}

In all the proofs, $C$ will denote a constant that does not depend on the variables involved in the equations.

For two real numbers $A,B\geq 0$, the notation $ A \ll B$ means that $A\leq C B$ for some constant $C>0$ independent of the variables in the problem.

\medskip

In Section 2 of \cite{chamizo_ubis_polynomial} (also in \cite{chamizo_ubis_gaps}), the key point to study the local behavior of the Fourier series $F_s$ was to obtain an explicit formula for  $F_s(p/q+h)-F_s(p/q)$    in the range $1<s<2$; this formula was just a twisted version of the one known for the Jacobi theta function. In our range $1/2<s\le 1$, such a formula cannot exist because of the convergence problems, but we will get some truncated versions of it in order to prove Theorems \ref{convergence} and \ref{smoothness}.

Let us introduce the partial sum
$$F_{s,N}(x)=  \sum_{n=1}^{ N} \frac{e^{2i\pi n^2 x}}{n^s}.$$

For any $H\neq 0$ let $ \widetilde \mu_H$ be the probability  measure defined by 
\begin{equation}
\label{defmutilde}
 \widetilde \mu_H (g)=\int_{\CC(H)} g(h) \, \frac{dh}{{2}H} ,
\end{equation}
where $\CC(H)$ is the annulus $\CC(H) = [-2H,-H]\cup[H,2H]$.
%%%%%%%%%%%%%%%%%%%%%%%%%%%%%%%%%%%%%%%%%%
\begin{lemma}
Let $f:\R\to\R$ be a function in $L^2(\R)$, and $x\in \R$. If $\alpha_f(x) <1$, then
\begin{equation} 
\label{defalphaf}
\alpha_f(x) = \sup \left \{ \beta\in[0,1): \exists \, C>0, \exists \, f_x\in\R\ \|f(x+\cdot)-f_x\|_{L^2( \widetilde \mu_H)} \leq  C |H|^{\beta}  \right \}.
\end{equation}
\end{lemma}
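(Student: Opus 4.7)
The plan is to verify the equivalence between the ball-based and annulus-based formulations of the $L^2$-exponent by a standard dyadic decomposition argument. Since $\alpha_f(x)<1$, only exponents $\beta\in[0,1)$ are relevant, and for such $\beta$ the polynomial appearing in the definition of $C^\beta_2(x)$ reduces to a constant; I would identify this constant with the $f_x$ in \eqref{defalphaf}.

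For the direct inclusion, the key observation is $\CC(H)\subset B(0,2H)$, so
\begin{equation*}
\|f(x+\cdot)-f_x\|_{L^2(\widetilde\mu_H)}^2 = \frac{1}{2H}\int_{\CC(H)}|f(x+h)-f_x|^2\,dh \le \frac{1}{2H}\int_{B(x,2H)}|f(y)-f_x|^2\,dy \le C^2(2H)^{2\beta},
\end{equation*}
by the definition of $C^\beta_2(x)$ applied at scale $2H$. This yields the annular bound at exponent $\beta$.

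For the converse, I would decompose the punctured ball into dyadic annuli, $B(x,H)\setminus\{x\} = \bigsqcup_{j\ge 1}(x+\CC(H/2^j))$, and apply the assumed annular bound on each layer:
\begin{equation*}
\int_{B(x,H)}|f(y)-f_x|^2\,dy = \sum_{j\ge 1}\int_{x+\CC(H/2^j)}|f(y)-f_x|^2\,dy \le 2C^2 H^{1+2\beta}\sum_{j\ge 1}2^{-j(1+2\beta)}.
\end{equation*}
The geometric sum converges since $\beta\ge 0$ gives $1+2\beta>0$. Dividing by $H$, taking square roots, and then taking suprema over admissible $\beta$ in both formulations yields the claimed equality.

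I do not anticipate any substantive obstacle; the entire content is the dyadic equivalence between ``ball'' and ``annulus'' $L^2$ averages. The essential role of the hypothesis $\alpha_f(x)<1$ is to force the polynomial correction in $C^\beta_2(x)$ to reduce to a single constant, which matches exactly what the annular formulation subtracts off. For $\alpha_f(x)\ge 1$, higher-order Taylor-like corrections would be required, and a single constant $f_x$ could not capture them, so the restriction $\beta\in[0,1)$ on the right-hand side of \eqref{defalphaf} is crucial.
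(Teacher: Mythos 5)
Your proposal is correct and follows essentially the same route as the paper: the forward direction via the inclusion $x+\CC(H)\subset B(x,2H)$, and the converse via the dyadic decomposition $B(x,H)=x+\bigcup_{j\ge1}\CC(H/2^j)$ (which the paper states in one line and you correctly expand into the convergent geometric sum, using $1+2\beta>0$). Your remark on the role of $\alpha_f(x)<1$ in reducing the polynomial to a constant matches the paper's implicit use of the constant $f_x$.
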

%%%%%%%%%%%%%%%%%%%%%%%%%%%%%%%%%%%%%%%%%%

%%%%%%%%%%%%%%%%%%%%%%%%%%%%%%%%%%%%%%%%%%
\begin{proof}
Assume that $\alpha:=\alpha_f(x) <1$. Then, for every $\ep>0$, there exist $C>0$ and a real number  $f_x \in \R$ such that for every $H>0$  small enough
$$ \left(  \frac{1}{H} \int_{B(x,H)} \big|f(h)-  f_x\big| ^2dh  \right) ^{1/2}\leq C H^{\alpha -\ep}.$$
Since $\CC(H) \subset B(x,2H)$, one has
\begin{equation}
\label{eq1}
\big\|f(x+\cdot)-f_x  \big\|_{L^2( \widetilde \mu_{H})} =   \! \left( \int_{\CC(H)} \big|f(x+h)-  f_x\big| ^2  \, \frac{dh}{{ 2}H}  \right)^{1/2}  \!\!\! \!\!\! \leq  C |2H|^{\alpha -\ep}   \! {  \ll} | H|^{\alpha -\ep}  .
\end{equation}
Conversely, if \eqref{eq1} holds for every $H>0$, then the result follows from the fact that  $B(x,H) = x+ \bigcup_{k\geq 1} \CC( H/2^k)$.
\end{proof}
 %%%%%%%%%%%%%%%%%%%%%%%%%%%%%%%%%%%%%%%%%%

So, we will use equation \eqref{defalphaf} as definition of the local $L^2$ regularity.

\medskip

It is important to notice that the frequencies in different ranges are going to behave differently. Hence, it is better to look at $N$ within dyadic intervals. Moreover, it will be easier to deal with smooth pieces. This motivates the following definition.

\begin{definition}
Let $N\geq 1$ and let $\psi:\R\to\R$ be a $C^\infty$ function with support included in $[1/2,2]$. One introduces the series
\begin{equation}
\label{defFpsi}
F_{s,N} ^{\psi}(x)=\sum_{n=1}^{+\infty}  \frac{e^{2i\pi n^2 x}}{n^s} \psi  \left(\frac nN \right ),
\end{equation}
and for $R>0$ one also sets
$$w_R^{\psi}(t)=e^{2i\pi Rt^2}\psi(t)$$
and 
\[
 E_N^{\psi}(x)=\frac 1N \sum_{n=1}^{+\infty}  w_{N^2 x}^{\psi} \left(\frac nN \right ),
\]
\end{definition}

For the function $\psi_s(t)=t^{-s} \psi(t)$ (which is  still $C^{\infty}$ with  support in $(1/2,2)$) it is immediate to check that 
\begin{equation}
\label{eq2}
 F_{s,N}^{\psi}(x)=N^{1-s} E_N^{\psi_s}(x).
 \end{equation}

%%%%%%%%%%%%%%%%%%%%%%%%%%%%%%%%%%%%%%%%%%
%%%%%%%%%%%%%%%%%%%%%%%%%%%%%%%%%%%%%%%%%%
%%%%%%%%%%%%%%%%%%%%%%%%%%%%%%%%%%%%%%%%%%
%%%%%%%%%%%%%%%%%%%%%%%%%%%%%%%%%%%%%%%%%%
%%%%%%%%%%%%%%%%%%%%%%%%%%%%%%%%%%%%%%%%%%
%%%%%%%%%%%%%%%%%%%%%%%%%%%%%%%%%%%%%%%%%%
%%%%%%%%%%%%%%%%%%%%%%%%%%%%%%%%%%%%%%%%%%
%%%%%%%%%%%%%%%%%%%%%%%%%%%%%%%%%%%%%%%%%%
%%%%%%%%%%%%%%%%%%%%%%%%%%%%%%%%%%%%%%%%%%
%%%%%%%%%%%%%%%%%%%%%%%%%%%%%%%%%%%%%%%%%%
\section{Summation Formula for $F_{s,N}$ and $F_{{\color{red}s},N}^\psi$}

\subsection{ {Poisson Summation.}}

Let $p,q$ be coprime integers, with $q>0$.  In this section we obtain some  formulas for $F(p/q+h)-F(p/q)$ with $h>0$. This is not a restriction, since 
\begin{equation}\label{symmetry}
 F_s\left(\frac pq-h \right)=\overline{F_s\left(\frac{-p}q+h \right)}.
\end{equation}

We are going to write a summation formula for $E_N^{\psi}(p/q+h)$, with $h>0$. 

\begin{proposition}
We have
\begin{equation}\label{poisson}
E_N^{\psi}\left(\frac pq +h\right)=\frac{1}{\sqrt q} \sum_{m \in \Z} \theta_m  \cdot \widehat {w_{N^2 h}^{\psi}} \left (\frac{Nm}{q} \right).
\end{equation}
where $\ds\widehat{f}(\xi) = \int_\R f(t)e^{-2i\pi t\xi}dt$ stands for the Fourier transform of $f$ and $(\theta_m)_{m\in \Z}$ are some complex numbers whose modulus is bounded by $\sqrt{2}$.
\end{proposition}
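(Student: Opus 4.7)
\medskip

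\textbf{Plan.} The strategy is a standard split-and-Poisson maneuver. Starting from
\[
E_N^{\psi}\!\left(\frac pq + h\right)=\frac{1}{N}\sum_{n\in\Z_{>0}} e^{2i\pi (p/q) n^2}\, e^{2i\pi h n^2}\,\psi\!\left(\frac nN\right),
\]
I would first write $n=qk+r$ with $0\le r<q$ and $k\in\Z$, exploiting that $e^{2i\pi (p/q)n^2}$ depends only on $r\bmod q$. After swapping sums one is left with the inner sum $\sum_{k\in\Z} g_r(k)$, where
\[
g_r(y)=e^{2i\pi h(qy+r)^2}\,\psi\!\left(\frac{qy+r}{N}\right).
\]

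\textbf{Poisson step.} I would apply the Poisson summation formula $\sum_{k\in\Z}g_r(k)=\sum_{m\in\Z}\widehat{g_r}(m)$, which is legitimate because $\psi$ is smooth and compactly supported, so $g_r$ is Schwartz (for fixed $h$, $q$, $r$, $N$). Performing the change of variables $u=(qy+r)/N$ in $\widehat{g_r}(m)$ yields
\[
\widehat{g_r}(m)=\frac{N}{q}\,e^{2i\pi mr/q}\int_{\R} e^{2i\pi N^2 h u^2}\psi(u)\, e^{-2i\pi (mN/q) u}\,du=\frac{N}{q}\,e^{2i\pi mr/q}\,\widehat{w_{N^2 h}^{\psi}}\!\left(\frac{mN}{q}\right).
\]
Collecting the $r$-sum gives
\[
E_N^{\psi}\!\left(\frac pq + h\right)=\frac{1}{q}\sum_{m\in\Z} \widehat{w_{N^2 h}^{\psi}}\!\left(\frac{mN}{q}\right) \sum_{r=0}^{q-1} e^{2i\pi (p r^2 + mr)/q}.
\]

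\textbf{Gauss sum bound.} The inner sum is the classical quadratic Gauss sum $S(p,m;q):=\sum_{r=0}^{q-1} e^{2i\pi(pr^2+mr)/q}$. Since $\gcd(p,q)=1$, the standard evaluation (completing the square when $q$ is odd, and a case analysis on the $2$-adic valuation when $q$ is even) gives $|S(p,m;q)|\in\{0,\sqrt{q},\sqrt{2q}\}$, so in every case $|S(p,m;q)|\le \sqrt{2q}$. Setting
\[
\theta_m=\frac{1}{\sqrt q}\,S(p,m;q)
\]
gives the desired normalization: $|\theta_m|\le \sqrt 2$, and substituting back produces exactly \eqref{poisson}.

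\textbf{Expected obstacle.} The computation itself is routine; the only subtle point is making sure the Gauss sum bound $\sqrt{2q}$ holds uniformly in $m$ and over all parities of $q$ (in particular for $q$ even, where the sum can vanish, equal $\sqrt q$, or equal $\sqrt{2q}$ depending on $m \bmod 2$ and $q\bmod 4$). Once this classical evaluation is invoked, the proof is complete.
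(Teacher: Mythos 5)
Your proof is correct and follows essentially the same route as the paper: split the sum into residue classes mod $q$, apply Poisson summation to each arithmetic progression, and recognize the resulting coefficient as a quadratic Gauss sum bounded by $\sqrt{2q}$. The paper likewise states the Gauss sum bounds without proof, so your level of detail matches.
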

\begin{proof} 
We begin by splitting the series into arithmetic progressions
\begin{eqnarray*}
 E_N^{\psi}\left(\frac pq +h\right) & = & \frac 1N \sum_{n=1}^{+\infty}   e^{2i\pi 
n^2\frac pq}  \cdot w_{N^2 h}^{\psi} \left(\frac nN \right )   \\
& = & \sum_{b =0}^{q-1} e^{2i\pi b^2 \frac pq}  \sum_{\substack{n =1:\\ n{\equiv}  b \! \!\! \mod q}}^{+\infty} \frac 1N  \, w_{N^2 h}^{\psi}\left(\frac nN \right )  .
\end{eqnarray*}
Now we apply Poisson Summation to the inner sum to get 
\begin{eqnarray*} 
\sum_{\substack{n =1:\\ {n \equiv} b \! \!\! \mod q}}^{+\infty}   \frac 1N  w_{N^2 h}^{\psi}\left(\frac nN \right )    & = &   \sum_{ n =1 }^{+\infty}  \frac 1N  w_{N^2 h}^{\psi}\left(\frac {b+n q}N \right )\\
& = & \sum_{m\in \Z} \frac 1q e^{2i\pi \frac{bm}{q}} \cdot  \widehat {w_{N^2 h}^{\psi} } \left (\frac{Nm}q \right)
\end{eqnarray*}
This yields 
\[
 E_N^{\psi}\left(\frac pq +h\right)=\frac 1q\sum_{m\in \Z}  \tau_m  \cdot \widehat {w_{N^2 h}^{\psi}}    \left(\frac{Nm}q \right)
\]
with
\[
 \tau_m=\sum_{b=0}^{q-1} e^{2i\pi \frac{pb^2 + mb}q}.
\]
This  term $\tau_m$ is a Gauss sum. One has the following bounds:
\begin{itemize}
\item
 for every $m\in\Z$, $\tau_m=\theta_m \sqrt{q}$ with $\theta_m:= \theta_m(p/q)$  satisfying
 $$0\le |\theta_m| \le \sqrt 2.$$
\item
if $q=2*\odd$, then $\theta_0=0$.
\item
if $q\neq 2*\odd$, then $ 1\leq \theta_0\leq \sqrt{2}$. 
%\item
%We can see its dependence on $m$ as follows: for $m$ even $\theta_m=c_{a,q} e(-\frac{2\overline{a}/8}{q} m^2)$ and for $m$ odd $\theta_m=\tilde c_{a,q} e(-\frac{2\overline{8a}}{q}m^2)$ with $\overline y$ the inverse of $y$ modulo the largest odd divisor of $q$. 
\end{itemize}
Finally, we get the summation formula \eqref{poisson}.
\end{proof}

%%%%%%%%%%%%%%%%%%%%%%%%%%%%%%%%%%%%%%%%%%
%%%%%%%%%%%%%%%%%%%%%%%%%%%%%%%%%%%%%%%%%%
%%%%%%%%%%%%%%%%%%%%%%%%%%%%%%%%%%%%%%%%%%
%%%%%%%%%%%%%%%%%%%%%%%%%%%%%%%%%%%%%%%%%%
\subsection{Behavior of the Fourier transform of $w^\psi_R$  } 

To use  formula \eqref{poisson}, one needs to understand the behavior of the Fourier transform of $w^\psi_R$  
\[
 \widehat {w_R^{\psi}} (\xi)= \int_\R \psi(t) e^{2i\pi (Rt^2-\xi t)} \, dt.
\]

On one hand, we have the trivial bound $|\widehat {w_R^{\psi} }(\xi) | \ll 1$ since $\psi$ is $C^\infty$, bounded by 1 and  compactly supported. On the other hand,  one has 
%%%%%%%%%%%%%%%%%%%%%%%%%%%%%%%%%%%%%%%%%%
{
\begin{lemma}\label{fourier}
Let $R>0$ and $\xi\in \R$. Let $\psi$ be a  $C^{\infty}$ function  compactly supported inside $[1/2,2]$. Let us introduce the mapping $ g_R^{\psi}:\R\to\C$
\[
 g_R^{\psi}(\xi)= e^{i\pi/4} \frac{e^{-i\pi \xi^2/(2R)}}{\sqrt{2R}} \psi \left(\frac{\xi}{2R}\right).
\]
Then 
one has
\begin{equation}\label{eq3}
 \widehat{ w_R^{\psi}}(\xi)=  g_R^{\psi}(\xi)+  \mathcal{O}_{\psi}\left(\frac{\rho_{R,\xi}}{\sqrt{R}}+ \frac{1}{(1+R+|\xi|)^{3/2}}\right),
\end{equation}
with $\rho_{R,\xi} = \begin{cases} \mbox{ 1  \ \ \ if $\xi/2R \in [1/2,2]$ and $R<1$,}\\
 \mbox{ 0 \ \ \  otherwise}.\end{cases}$\!\!\!\!  \!\!\!.
 Moreover, one has
\begin{equation}\label{fourier_substract}
 \widehat{ w_{2R}^{\psi}}(\xi)- \widehat{ w_{R}^{\psi}}(\xi) =  g_{2R}^{\psi}(\xi)-g_{R}^{\psi}(\xi)+ \mathcal{O}_{\psi} \left( \rho_{R,\xi}\sqrt{R}+\frac R{(1+R+|\xi|)^{5/2}} \right),
\end{equation}

Moreover, the constant implicit in $\mathcal{O}_{\psi}$ depends just on the $L^\infty$-norm of a finite number of derivatives of $\psi$.
\end{lemma}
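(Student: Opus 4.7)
The plan is to apply the method of stationary phase together with non-stationary integration by parts to the oscillatory integral
$$\widehat{w_R^\psi}(\xi)=\int_{\R}\psi(t)\,e^{2\pi i\phi(t)}\,dt,\qquad \phi(t)=Rt^2-\xi t,$$
whose unique critical point is $t_\ast=\xi/(2R)$ with Hessian $\phi''(t_\ast)=2R$. I would case-split according to the location of $t_\ast$ relative to $\supp\psi\subset[1/2,2]$ and the size of $R$, gluing the pieces with a smooth partition of unity.

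In the \emph{stationary regime} $t_\ast\in[1/2,2]$ (i.e.\ $\xi\in[R,4R]$) with $R\geq 1$: since $\phi(t)-\phi(t_\ast)=R(t-t_\ast)^2$ is an \emph{exact} quadratic, translating by $t_\ast$ and invoking the classical stationary-phase formula produces $g_R^\psi(\xi)$ as the leading term, with first correction of order $R^{-3/2}\ll(1+R+|\xi|)^{-3/2}$, which supplies the second summand in \eqref{eq3}. In the stationary regime with $R<1$ the stationary-phase remainder degenerates, but then the trivial bounds $|\widehat{w_R^\psi}(\xi)|\leq\|\psi\|_{L^1}$ and $|g_R^\psi(\xi)|\ll R^{-1/2}$ together ensure the discrepancy is absorbed into the $\rho_{R,\xi}/\sqrt{R}$ summand.

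In the \emph{non-stationary regime} where $t_\ast$ is at positive distance from $[1/2,2]$, one has $|\phi'(t)|=|2Rt-\xi|\gg\max(R,|\xi|)$ uniformly on $\supp\psi$. Iterated integration by parts using the operator $L:=(2\pi i\phi'(t))^{-1}\partial_t$ (which preserves $e^{2\pi i\phi}$) then yields $|\widehat{w_R^\psi}(\xi)|\ll_{\psi,k}(1+R+|\xi|)^{-k}$ for every integer $k\geq 1$, while $g_R^\psi(\xi)$ vanishes in this regime, completing \eqref{eq3}.

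For the subtraction formula \eqref{fourier_substract} the plan is to differentiate in the frequency-like parameter,
$$\widehat{w_{2R}^\psi}(\xi)-\widehat{w_R^\psi}(\xi)=\int_R^{2R}\widehat{w_\lambda^{\tilde\psi}}(\xi)\,d\lambda,\qquad \tilde\psi(t):=2\pi i\,t^2\psi(t).$$
The non-stationary bound above, applied with $k=3$ to $\widehat{w_\lambda^{\tilde\psi}}(\xi)$, yields $(1+\lambda+|\xi|)^{-5/2}$ pointwise in $\lambda$, so integrating on $[R,2R]$ produces $R\,(1+R+|\xi|)^{-5/2}$, the prefactor $R$ coming for free from the interval length. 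In the stationary regime $R\sim|\xi|$ I would instead subtract the second-order stationary-phase expansions of $\widehat{w_{2R}^\psi}$ and $\widehat{w_R^\psi}$: the leading pieces reassemble as $g_{2R}^\psi(\xi)-g_R^\psi(\xi)$, and the individual correction terms $O(R^{-3/2})\sim R/(R+|\xi|)^{5/2}$ give the stated error. The hard part will be the bookkeeping across the four regimes (stationary vs.\ non-stationary, $R\geq 1$ vs.\ $R<1$), ensuring the cutoff functions contribute only at the claimed order, and in particular checking that the $\rho_{R,\xi}$ term correctly captures the small-$R$ breakdown of the stationary-phase method.
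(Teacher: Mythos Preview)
Your proposal is correct and follows essentially the same route as the paper. Both arguments split into the non-stationary regime (handled by repeated integration by parts), the small-$R$ stationary regime (handled by trivial size bounds, which is exactly what the $\rho_{R,\xi}$ term is there to absorb), and the large-$R$ stationary regime (handled by the standard stationary-phase expansion); for \eqref{fourier_substract} the paper packages the argument by citing from Stein that the remainder $S(\lambda)=\widehat{w_\lambda^\psi}(\xi)-g_\lambda^\psi(\xi)$ satisfies $|S'(\lambda)|\ll\lambda^{-5/2}$ and then applies the Mean Value Theorem, which is exactly your ``differentiate in the parameter and integrate over $[R,2R]$'' device written in slightly different language. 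One small remark: in the stationary regime you do not actually need a \emph{second}-order expansion---direct subtraction of the first-order estimate \eqref{eq3} already gives an $O(R^{-3/2})$ error, and since $R\sim|\xi|$ there this coincides with $R(1+R+|\xi|)^{-5/2}$.
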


\begin{proof}
For $\xi/2R\not\in [1/2,2]$ the upper bound (\ref{eq3}) comes just from integrating by parts several times; for $\xi, R\ll 1$ the bound (\ref{eq3}) is trivial.  The same properties hold true for the upper bound in 
  (\ref{fourier_substract}).
  
  \smallskip
  
 Let us assume $\xi/2R\in [1/2,2]$, and $R>1$.  The  lemma is just a consequence of the stationary phase theorem. Precisely, Proposition 3 in Chapter VIII of \cite{stein} (and the remarks thereafter) implies that  for some suitable functions $f,g:\R\to\R$, and if $g$ is such that  $g'(t_0)=0$ at a unique point $t_0$,  if one sets 
$$
S(\lambda)=\int_\mathbb{R} f(t) e^ {i\lambda g(t)} \,dt -  \sqrt{\frac{2\pi}{-i\lambda g''(t_0)}} f(t_0) e^{i\lambda g(t_0)},
$$
then $|S(\lambda)|\ll \lambda^{-3/2} $ 
and also $|S'(\lambda)| \ll \lambda^ {-5/2}$, where  the implicit constants depend just on upper bounds for some derivatives of $f$ and $g$, and also on a lower bound for $g''$. 

In our case, we can apply it with  $f=\psi$, $\lambda=R$, and $g(t)=2\pi(t^2-\xi/\lambda)$ to get precisely (\ref{eq3}).

\medskip
With the same choices for $f$ and $g$, by applying the Mean Value Theorem and our bound for $S'(\lambda)$, we finally  obtain formula  (\ref{fourier_substract}).
 \end{proof}
 }
%%%%%%%%%%%%%%%%%%%%%%%%%%%%%%%%%%%%%%%%%%

%%%%%%%%%%%%%%%%%%%%%%%%%%%%%%%%%%%%%%%%%%
%%%%%%%%%%%%%%%%%%%%%%%%%%%%%%%%%%%%%%%%%%

%%%%%%%%%%%%%%%%%%%%%%%%%%%%%%%%%%%%%%%%%%
%%%%%%%%%%%%%%%%%%%%%%%%%%%%%%%%%%%%%%%%%%
%%%%%%%%%%%%%%%%%%%%%%%%%%%%%%%%%%%%%%%%%%
%%%%%%%%%%%%%%%%%%%%%%%%%%%%%%%%%%%%%%%%%%
\subsection{Summation formula for the partial series  $F_{s,N}$  }

This important formula  will be useful to study the convergence of $F_s(x)$.

%%%%%%%%%%%%%%%%%%%%%%%%%%%%%%%%%%%%%%%%%%
\begin{proposition}\label{summation}
Let $p,q$ be two coprime integers. For $N\geq q$ and $0\leq h \leq q^{-1}$, we have
\begin{eqnarray*}
 F_{s,2N} \left(\frac pq +h \right)-F_{s,N} \left(\frac pq +h \right) & =  & \frac{   \theta_0   }{\sqrt q} \displaystyle \int_N^{2N} \frac{e^{2i\pi ht^2}}{t^s} dt\\
 &&+ G_{s,2N}(h)-G_{s,N}(h)+\mathcal{O}( N^{\frac 12-s}\log q),
\end{eqnarray*}
where 
\begin{equation}
\label{DEFGN}
G_{s,N}(h)=  (2hq)^{s-\frac 12} e^{i\pi/4} \sum_{ m=1}^{\lfloor 2Nhq \rfloor} \frac{\theta_m}{m^s} e^{-i\pi \frac{m^2}{q^2 h}}.
\end{equation}
\end{proposition}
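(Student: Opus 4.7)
The plan is to mimic the Poisson-summation argument of Proposition 3.3 directly on the sharp difference, and then split the resulting series over $m\in\Z$ into three regimes, each handled by a different tool.

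First I would write
\[
F_{s,2N}\!\left(\frac pq+h\right)-F_{s,N}\!\left(\frac pq+h\right)=\sum_{N<n\le 2N}\frac{e^{2i\pi n^2(p/q+h)}}{n^s},
\]
split by residues $b\!\!\mod q$, and apply Poisson summation in each arithmetic progression. Treating the sharp indicator $\mathbf{1}_{(N,2N]}$ as a function whose Fourier transform still makes sense (or, equivalently, smoothing it at scale $1$ and absorbing the endpoint mismatch of size $O(N^{-s})$ into the final error), this yields
\[
F_{s,2N}\!\left(\frac pq+h\right)-F_{s,N}\!\left(\frac pq+h\right)=\frac{1}{\sqrt q}\sum_{m\in\Z}\theta_m\,\widehat{f_N}(m/q)+O(N^{1/2-s}),
\]
where $f_N(t)=\mathbf{1}_{(N,2N]}(t)\,t^{-s}e^{2i\pi ht^2}$ and the $\theta_m$ are the normalized Gauss sums of Section 3.1. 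The $m=0$ term immediately produces $\frac{\theta_0}{\sqrt q}\int_N^{2N}t^{-s}e^{2i\pi ht^2}\,dt$, which is the first term of the claim.

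For $m\neq 0$ the quadratic phase $2\pi(ht^2-(m/q)t)$ has its unique critical point at $t_m=m/(2hq)$, which lies in $[N,2N]$ precisely when $m\in J:=(2Nhq,4Nhq]$. I would apply Lemma 3.2 (with $R=N^2h$, $\xi=Nm/q$, and $\psi_s$ corresponding to a smooth approximation of $\mathbf{1}_{[1,2]}$) to the $m\in J$ terms: the leading stationary-phase contribution, after inserting $t_m^{-s}=(2hq/m)^s$ and the outer prefactor $\theta_m/\sqrt q$, matches exactly the $m$-th summand of \eqref{DEFGN}, and summing over $m\in(\lfloor 2Nhq\rfloor,\lfloor 4Nhq\rfloor]$ reconstructs the telescoping difference $G_{s,2N}(h)-G_{s,N}(h)$. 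For $m\notin J$ (all $m\le 0$ and positive $m$ outside $J$) the phase has no stationary point in the integration interval; integration by parts produces bounds decaying in the distances $|m/q-2hN|$ and $|m/q-4hN|$.

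The main obstacle will be collecting all errors into the announced budget $O(N^{1/2-s}\log q)$. Three contributions must be controlled: (a) the smoothing/boundary error from Step 1, which is trivially $O(N^{-s})$; (b) the stationary-phase remainder inside $J$, whose size from Lemma 3.2 sums to $\ll N^{1/2-s}$ thanks to the explicit $O_\psi$ factor; and, most delicately, (c) the non-stationary tail from $m\notin J$, where summing the integration-by-parts bound against $|\theta_m|/\sqrt q\le\sqrt{2/q}$ produces a harmonic-type sum truncated at scale $q$, which is precisely the origin of the $\log q$ factor. The subtlest step is the transition zone $|m-\lfloor 2Nhq\rfloor|,|m-\lfloor 4Nhq\rfloor|=O(1)$, where neither the stationary nor the non-stationary estimate is sharp: here extra care is needed to verify that the $O(1)$ boundary values of $m$ are still absorbed in the error budget, and that no main-term information leaks between $G_{s,N}$ and $G_{s,2N}$ when using the integer truncation $\lfloor\cdot\rfloor$ in the definition of $G_{s,N}$.
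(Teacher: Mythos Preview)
Your overall plan---Poisson summation, then separate the $m=0$ term, the stationary range $m\in J$, and the non-stationary tail---is sound and matches the paper's architecture. The essential difference is in how the sharp endpoint of $\mathbf{1}_{(N,2N]}$ is handled, and here the proposal has a real gap.

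You suggest either keeping the sharp indicator or ``smoothing it at scale $1$ and absorbing the endpoint mismatch of size $O(N^{-s})$''. Neither option works as stated. If you smooth at scale $1$ in $n$ (scale $1/N$ in $t$), the resulting bump has $j$-th derivative of order $N^j$; since the implicit constant in Lemma~\ref{fourier} depends on finitely many derivatives of $\psi$, the $O_\psi$ error terms blow up and the lemma becomes useless. If instead you keep the sharp cutoff and integrate by parts once, the boundary term at $t=1$ contributes (in the paper's normalization, after multiplying by $N^{1-s}/\sqrt q$) a quantity of size $\sqrt q\,N^{-s}/|m-2Nhq|$; summing over $m$ using only $|\theta_m|\le\sqrt 2$ gives $\sqrt q\,N^{-s}\sum_m |m-2Nhq|^{-1}$, which diverges. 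Your assertion that this harmonic sum is ``truncated at scale $q$'' is not justified: further integrations by parts improve the remaining \emph{integral} but do nothing to the boundary term already produced. To rescue the sharp-cutoff route one would have to exploit the oscillation in $m$ coming from the endpoint phase $e^{-2\pi i Nm/q}$ via an Abel-summation argument, a nontrivial step you do not mention.

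The paper circumvents this by writing $\mathbf{1}_{[1,2]}=\widetilde\psi+\sum_{k\ge2}\eta^k+\sum_{k\ge2}\widetilde{\eta^k}$, a smooth bulk plus dyadic boundary pieces $\eta^k(t)=\eta((t-1)2^k)$ at scale $2^{-k}$. Each $\eta^k$ has derivatives of size $O(2^{jk})$, so Lemma~\ref{fourier} applies with controlled constants: Lemma~\ref{lemma37} handles the bulk, Lemma~\ref{lemma3.9} gives an error $O(\sqrt q/N^s+N^{1/2-s}\gamma_k)$ for each boundary piece, and Lemma~\ref{lemma3.8} supplies the trivial bound $O(2^{-k}N^{1-s})$ for large $k$. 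Splitting at $K$ with $2^{-K}\approx\sqrt q/N$ and summing yields $K\sqrt q/N^s\ll N^{1/2-s}\log q$, which is the actual origin of the $\log q$. Your ``transition zone'' concern evaporates in this framework, since there is no sharp boundary, only smooth pieces at every scale.
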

%%%%%%%%%%%%%%%%%%%%%%%%%%%%%%%%%%%%%%%%%%

Pay attention to the fact that $G_{s,N}$ depends on $p$ and $q$. We omit this dependence in the notation  for clarity.
%%%%%%%%%%%%%%%%%%%%%%%%%%%%%%%%%%%%%%%%%%
\begin{proof}
We  can write 
\[
 F_{s,2N} (x)-F_{s,N} (x)=F_{s,N}^{{\bf 1\!\!\!1}_{[1,2]}}(x).
\]
Hence, we would like to use the formulas proved in the preceding section, but those formulas apply only to compactly supported $C^\infty$ functions. We thus decompose the {indicator} function ${\bf 1\!\!\!1}_{[1,2]}$ into a countable sum of $C^\infty$ functions, as follows. Let us consider $\eta$, a $C^{\infty}$ function with support $[1/2,2]$ such that
\[
 \eta(t)=1-\eta(t/2)     \qquad 1\le t\le 2.
\]
Then, the function
\[
 \psi(t)=\sum_{k\ge 2} \eta \left(\frac{t}{2^{-k}}   \right) 
\]
has support in $[0,1/2]$, equals 1 in $[0,1/4]$ and is $C^{\infty}$ in $[1/4,1/2]$. Therefore, we have
\begin{equation}
\label{linear}
 {\bf 1\!\!\!1}_{[1,2]}(t)= \psi(t-1)+\psi(2-t)+\psit(t)
\end{equation}
with $\psit$ some $C^{\infty}$ function with support included in $[1,2]$.

\medskip

In order to get a formula for $F_{s,N}^{{\bf 1\!\!\!1}_{[1,2]}}(x)$, we are going to use  \eqref{linear} and the linearity in $\psi$ of the formula \eqref{defFpsi}.

\medskip
 
 We will first get a formula for $F_{s,N}^{\phi}$, where $\phi$ is any $C^\infty$ function supported in $[1/2,2]$. In particular, this will work with $\phi=\widetilde\psi$. 
 \begin{lemma}
 \label{lemma37}
 Let $\phi$ be a $C^\infty$ function supported in $[1/2,2]$. Then, 
 \begin{equation}
\label{eq10}
 F_{s,N}^{\phi} \left(\frac pq+h \right) = \frac{ N^{1-s} \theta_0 }   {\sqrt q}\displaystyle \int_\R \frac{e^{2i\pi {N^2 h} t^2}}{t^s}\phi(t)dt + G_{s,N}^{\phi}(h) +  \mathcal{O}_{\phi}\left (  \frac{ {q}}{N^{1/2+s}}\right),
\end{equation}
where 
\begin{equation}
\label{defGN}
G_{s,N}^{\phi}(h)= (2hq)^{s-\frac 12} e^{i\pi/4} \sum_{m\neq 0} \frac{\theta_m}{m^s} e^{- i\pi \frac{m^2}{2q^2 h}} \phi \left(\frac{m}{2Nhq} \right).
\end{equation}
 \end{lemma}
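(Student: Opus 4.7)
The strategy is to apply Poisson summation~\eqref{poisson} to $E^{\phi_s}_N$ (where $\phi_s(t)=t^{-s}\phi(t)$ is $C^\infty$ with support in $[1/2,2]$), extract the $m=0$ term as the main integral, and apply the stationary-phase expansion of Lemma~\ref{fourier} to the $m\neq 0$ terms to produce $G^\phi_{s,N}(h)$ modulo a controlled remainder. Concretely, \eqref{eq2} combined with~\eqref{poisson} gives
\[
F^\phi_{s,N}\!\left(\frac pq+h\right)=\frac{N^{1-s}}{\sqrt q}\sum_{m\in\Z}\theta_m\,\widehat{w^{\phi_s}_{N^2h}}\!\left(\frac{Nm}{q}\right),
\]
and the $m=0$ contribution is exactly $\frac{N^{1-s}\theta_0}{\sqrt q}\int t^{-s}\phi(t)\,e^{2\pi iN^2ht^2}\,dt$, which is the first summand of~\eqref{eq10}.

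For $m\neq 0$, the expansion $\widehat{w^{\phi_s}_{N^2h}}(Nm/q)=g^{\phi_s}_{N^2h}(Nm/q)+E_m$ from Lemma~\ref{fourier}, together with the identity $\phi_s(m/(2Nqh))=(2Nqh/m)^s\phi(m/(2Nqh))$, collapses the prefactors: the scalar $\frac{N^{1-s}(2Nqh)^s}{\sqrt q\cdot N\sqrt{2h}}$ simplifies to $(2qh)^{s-1/2}$, so that summation over $m\neq 0$ of the leading $g$-part reproduces $G^\phi_{s,N}(h)$ exactly. What remains is to estimate $\frac{N^{1-s}}{\sqrt q}\sum_{m\neq 0}|\theta_m||E_m|\ll_\phi q/N^{1/2+s}$.

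The $\rho$-part of $E_m$ is nonzero only when $N^2h<1$ and $m/(2Nqh)\in[1/2,2]$, which combined with $q\le N$ forces $m<4q/N$; thus integer values $m\ge 1$ occur only when $q\asymp N$, in which case there are $O(1)$ admissible such $m$, each contributing $1/(N\sqrt h)\ll\sqrt{q/N}$ (as $h\ge 1/(4Nq)$ is required for the interval to contain an integer), so the total is $O(N^{1/2-s})=O(q/N^{1/2+s})$ in that regime. The polynomial-decay part naturally splits into the stationary window $|m|\asymp Nqh$, handled by the $R^{-3/2}$ bound on $O(Nqh)$ terms (giving $q/(N^2\sqrt h)\ll q^{3/2}/N^{3/2}$ using $h\ge 1/(Nq)$, the only range in which the window contains an integer), and the non-stationary tails. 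The latter is the main technical obstacle: the uniform bound $1/(1+R+|\xi|)^{3/2}$ recorded in Lemma~\ref{fourier} only produces $O(\sqrt q/N^s)$ after summation, too weak when $q\ll N$. One must sharpen it by one additional integration by parts in the regime $|\xi|\gg R$ to obtain $|\widehat{w^{\phi_s}_R}(\xi)|\ll|\xi|^{-2}$, so that the tail sum is of order $(q/N)^2$; after the $N^{1-s}/\sqrt q$ prefactor this gives $q^{3/2}/N^{s+1}\ll q/N^{s+1/2}$, completing the bound.
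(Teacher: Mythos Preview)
Your approach is exactly the paper's: combine \eqref{eq2} with \eqref{poisson}, separate the $m=0$ term, feed each $m\neq 0$ through Lemma~\ref{fourier}, and identify the leading $g$-part with $G_{s,N}^{\phi}(h)$. The identification of the main terms is correct.

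Where you diverge is in the remainder estimate. Your assertion that the bound $(1+R+|\xi|)^{-3/2}$ from Lemma~\ref{fourier} ``only produces $O(\sqrt q/N^s)$ after summation'' is a miscalculation, and the ensuing detour through an extra integration by parts is unnecessary. With $\xi=Nm/q$ and the ambient hypothesis $N\ge q$ (so that $Nm/q\ge 1$ for every $m\ge 1$), one has simply
\[
\sum_{m\ge 1}\frac{1}{(1+N^2h+Nm/q)^{3/2}}
\;\le\;\sum_{m\ge 1}\Big(\frac{q}{Nm}\Big)^{3/2}
\;=\;\Big(\frac{q}{N}\Big)^{3/2}\zeta(3/2),
\]
and after the prefactor $N^{1-s}/\sqrt q$ this is exactly $q/N^{1/2+s}$. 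This is all the paper does: it records the error as $\mathcal{O}((Nm/q)^{-3/2})$ and sums. There is no need to split off a stationary window or to strengthen the tail decay to $|\xi|^{-2}$.

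Your explicit handling of the $\rho_{R,\xi}/\sqrt R$ term is more careful than the paper (which silently absorbs it), and your conclusion there is correct: that term can only be nonzero for some $m\ge 1$ when $N<4q$, in which regime $N^{1/2-s}\asymp q/N^{1/2+s}$.
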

 
 \begin{proof}
 
 First,  by \eqref{eq2} one has $ 
 F_{s,N}^{\phi}(x)=N^{1-s} E_N^{\phi_s}(x) $. Further, by (\ref{poisson}) one has
$$
 E_N^{\phi_s} \left(\frac pq+h \right)=  \frac{1}{\sqrt q} \sum_{m \in \Z} \theta_m  \cdot \widehat {w_{N^2 h}^{\phi_s}} \left (\frac{Nm}{q} \right),
$$
and then, applying   Lemma \ref{fourier} with $\xi = \frac{Nm}{q}$ and $R=N^2 h$, one gets
\begin{eqnarray*}
 E_N^{\phi_s} \left(\frac pq+h \right) \!\!& =  &  \theta_0   \frac{\hat w_{N^2 h} ^{\phi_s}(0)}{\sqrt q}    \\
 && +\frac{e^{i\pi/4}}{\sqrt q} \sum_{m\neq 0} \left( \theta_m  \phi_s \left(\frac{m}{2Nqh}\right)  \frac{e^{-i
 \pi \frac{m^2}{2q^2 h}}}{\sqrt{2N^2 h}} \right.
 \\ && \ \ \ \ \ \ \ \ \ \ \ \ \ \ \ \  \left.+ \mathcal{O}\left( { ( Nm/q )^{-3/2} } \right)  \right) .
\end{eqnarray*}

When $ \frac{\xi}{2R} = \frac{ m}{2Nq h}>2$,  $\phi_s \left(\frac{m}{2Nqh}\right)=0$. Recalling that $N \geq q$,  since $\phi_s(t) = \phi(t)t^{-s}$, the above equation can be rewritten
\begin{eqnarray}
\label{eq22}
 F_{s,N}^{\phi} \left(\frac pq+h \right)  & =  &   \frac{ N^{1-s} \theta_0 }   {\sqrt q}\displaystyle \int_\R \frac{e^{2i\pi {N^2 h} t^2}}{t^s}\phi(t)dt   +G_N^\phi (h) \\
\nonumber &&+  \frac{N^{1-s} }{\sqrt{q}} \sum_{m \geq 1}   \mathcal{O}_\phi\left({(Nm/q )^{-3/2}} \right)  . 
\end{eqnarray}
The last term is controlled by
$$ \frac{N^{1-s} }{\sqrt{q}}   \left(\frac 1{(N/q )^{3/2}} \right)    = \frac{{q}}{N^{1/2+s}},$$
which yields  \eqref{eq10}.
\end{proof}

Now, one wants to obtain a comparable formula for  $\eta^k (t) :=\eta((t-1)/2^{-k})$ for all  $k\geq 1$.   We begin with a bound which is good just for large $k$.
 
 \begin{lemma}
 \label{lemma3.8}   For any $k\ge 1$ and $0<h\le 1/q$, one has 
$$
 F_{s,N}^{\eta^k}\left(\frac pq+h\right)=  \frac{   \theta_0 N^{1-s}   }   {\sqrt q}  \displaystyle \int_\R \frac{e^{2i\pi N^2 {  h} t^2}}{ t^s}\eta^k(t)dt+ G_{s,N}^{\eta^k}(h)+     \mathcal{O}_{\eta} \left( N^{1-s}2^{-k}  \right).
$$
\end{lemma}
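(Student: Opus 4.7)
The plan is to prove the lemma by controlling each of the three quantities in the identity separately and then combining via the triangle inequality. The decisive feature is that $\eta^k$ has support in an interval of length $\sim 2^{-k}$ around $t=1$.

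First, the support condition $\eta^k(n/N)\neq 0$ forces $n$ into an interval of length $\sim N/2^k$ around $N$, so the sum defining $F_{s,N}^{\eta^k}(p/q+h)$ has at most $1+N\cdot 2^{-k}$ surviving terms, each of size $\lesssim N^{-s}$; this yields $|F_{s,N}^{\eta^k}(p/q+h)|\lesssim_{\eta} N^{1-s}\cdot 2^{-k}$ throughout the regime $2^k\lesssim N$. For the integral term in the identity, I iterate integration by parts in $\int_\R \eta^k(t)\,t^{-s}\,e^{2i\pi N^2 h t^2}\,dt$: the phase has derivative $\sim N^2 h$ without stationary point in $\supp\eta^k\subset[1,2]$, and each integration by parts divides by $N^2 h$ while differentiating $\eta^k$ (contributing a factor $\sim 2^k$); this produces the bound $\lesssim 2^{-k}(1+N^2 h\cdot 2^{-k})^{-M}$ for any fixed $M$, so that the integral term itself is $\lesssim_\eta N^{1-s}\cdot 2^{-k}$.

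For the Gauss sum $G_{s,N}^{\eta^k}(h)$, the cutoff $\eta^k(m/(2Nhq))$ confines $m$ to an interval of length $\sim Nhq\cdot 2^{-k}$ centered near $2Nhq$, giving $\lesssim 1 + Nhq\cdot 2^{-k}$ surviving integers, each contributing $\lesssim (Nhq)^{-s}$. Multiplying by the prefactor $(2hq)^{s-1/2}$ and using $hq\leq 1$, one obtains $|G_{s,N}^{\eta^k}(h)|\lesssim N^{1-s}\cdot 2^{-k}+(hq)^{-1/2}N^{-s}$. In the bulk regime where $Nhq\cdot 2^{-k}\gtrsim 1$, combining these three bounds through the triangle inequality already gives the desired error $\lesssim_\eta N^{1-s}\cdot 2^{-k}$.

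The main obstacle is the single-integer regime where only one $m_0\sim 2Nhq$ survives in $G_{s,N}^{\eta^k}$: the residual term $(hq)^{-1/2}N^{-s}$ can be as large as $N^{1/2-s}$ (since $m_0\geq 1$ forces $hq\gtrsim 1/N$), which exceeds the target $N^{1-s}\cdot 2^{-k}$ as soon as $2^k>\sqrt N$. To deal with this, one must exhibit a compensating contribution on the left: the single integer $n\sim N$ surviving in the support of $\eta^k(\cdot/N)$ corresponds, via the Poisson duality underlying the proof of Lemma \ref{summation} (the stationary point $m_0/(2Nhq)$ of the relevant oscillatory integral lies precisely in $\supp\eta^k$), to the same $m_0$ appearing in the Gauss sum, so that the $N^{1/2-s}$ contributions cancel in the combination $F_{s,N}^{\eta^k}-\text{(integral term)}-G_{s,N}^{\eta^k}$, leaving a remainder of order $N^{1-s}\cdot 2^{-k}$ as claimed.
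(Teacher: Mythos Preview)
Your first two paragraphs match the paper's proof exactly: the paper simply bounds $|F_{s,N}^{\eta^k}|$, the integral term, and $|G_{s,N}^{\eta^k}|$ individually by $\ll N^{1-s}2^{-k}$ (using $qh\le 1$) and concludes by the triangle inequality. You actually go further than the paper in flagging that the one-line estimate $\sum_{m:\,\eta^k(m/2Nhq)\neq 0}m^{-s}\ll |J_k|\,(2Nhq)^{-s}$ ignores the ``$+1$'' in the integer count: if $|J_k|<1$ but an integer $m_0\sim 2Nhq$ happens to lie in $J_k$, that single term contributes $(qh)^{-1/2}N^{-s}$, which can be as large as $N^{1/2-s}$ and, for $2^k>N^{1/2}$, exceeds the target $N^{1-s}2^{-k}$.

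However, the cancellation you propose does not take place. In the problematic range $N^{1/2}\ll 2^k\ll N$ there are still $\sim N2^{-k}\gg 1$ integers $n$ in the support of $\eta^k(\cdot/N)$, not a single one, and both $|F_{s,N}^{\eta^k}|$ and the integral term are genuinely $O(N^{1-s}2^{-k})$; there is nothing of size $N^{1/2-s}$ on the $F$-side to absorb the lone $m_0$-term in $G$. The Poisson duality you invoke does relate $F$ to $G$, but only through the stationary-phase expansion, whose error constants depend on derivatives of $\eta^k$ and therefore carry hidden powers of $2^k$ --- this is exactly why the paper needs the rescaling of Lemma~\ref{lemma3.9}, which for this contribution produces only an $N^{1/2-s}\gamma_k$ remainder rather than $N^{1-s}2^{-k}$. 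So the stray $N^{1/2-s}$ cannot be removed by your argument; it is, however, harmless for the application to Proposition~\ref{summation}, whose final error is $O(N^{1/2-s}\log q)$.
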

\begin{proof}
 First,  when $k$ becomes large, since $\eta$ has support in $[1/2,2]$, one has directly:
 -  by \eqref{defFpsi}:
$$
 |F_{s,N}^{\eta^k}(x)| \leq   \sum_{n=1}^{+\infty}  \frac{1}{n^s}  \left|\eta  \left( \frac{\frac{n}{N}  -1}{2^{-k}}\right ) \right|\leq   \sum_{n=N +N2^{-k-1}}^{ N+ N 2^{ -k+1}}  \frac{1}{n^s}  \ll  N^{1-s} 2^{-k } 
$$
- by \eqref{defGN}:
 \[
 |G_{s,N}^{\eta^k}(h)   | \ll (qh)^{s-\frac 12} \sum _{m: \,\eta^k(\frac{m}{2Nhq})\neq 0 } \frac{1}{m^s}   \ll  (qh)^{s-\frac 12} \frac{2Nhq2^{-k}}{(2Nhq)^{s}}    \ll  \sqrt{qh} 2^{-k} N^{1-s}
\]
- and
$$ \left|\frac{\theta_0 N^{1-s} }   {\sqrt q}   \displaystyle \int_\R \frac{e^{2i\pi N^2 {  h} t^2}}{ t^s}\eta^k(t)dt  \right| \ll N^{1-s}q^{-1/2}     \int_{1+2^{-k-1}} ^{1+2^{-k+1}}  \frac{dt}{ t^s}  \ll 2^{-k}  N^{1-s}q^{-1/2}  ,$$
hence the result by \eqref{eq22}, where we used that $qh \leq 1$.
\end{proof}
   
One can obtain another bound  that is good for any $k$. 
 \begin{lemma}
 \label{lemma3.9}
 For every $k\geq 2$ and $0<h\le 1$, one has
$$
 F_{s,N}^{\eta^{k}} \left(\frac pq+h \right)     =  \frac{ \theta_0 N^{1-s}}   {\sqrt q}   \displaystyle \int_\R \frac{e^{2i\pi N^2 {  h} t^2}}{ t^s}  {  {\eta^k}} (t)dt  +G_{s,N}^{{\eta^k}} (h) +   \mathcal{O}_\eta \left (   \frac{\sqrt{q}}{N^s} +  N^{1/2-s}\gamma_k  \right),$$
 {where the sequence  $(\gamma_k)_{k\geq 1} $ is positive and satisfies $\sum_{k\ge 1} \gamma_k\ll 1$.}
 \end{lemma}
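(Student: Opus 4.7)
The plan is to refine the error bound of Lemma \ref{lemma37} for the specific choice $\phi = \eta^k$, carefully tracking how constants depend on $k$. A direct application of Lemma \ref{lemma37} fails because its implicit $\mathcal{O}_\phi$ constant depends on $C^J$-norms of $\eta^k_s$, which blow up like $2^{Jk}$; I need to reorganize the calculation so that the constants become uniform in $k$.

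First, by \eqref{eq2} and the Poisson summation formula \eqref{poisson},
\[
F_{s,N}^{\eta^k}\!\left(\tfrac pq + h\right) = \frac{N^{1-s}}{\sqrt q} \sum_{m \in \Z} \theta_m\, \widehat{w_{N^2 h}^{\eta^k_s}}\!\left(\tfrac{Nm}{q}\right).
\]
The $m = 0$ term yields the integral in the statement, and substituting the main term $g_{N^2h}^{\eta^k_s}(Nm/q)$ from Lemma \ref{fourier} in each $m \neq 0$ summand produces precisely $G_{s,N}^{\eta^k}(h)$ (the same computation as in Lemma \ref{lemma37}). It remains to bound
\[
\mathcal{E}_k := \frac{N^{1-s}}{\sqrt q} \sum_{m \neq 0} \theta_m \Bigl( \widehat{w_{N^2 h}^{\eta^k_s}}(\tfrac{Nm}{q}) - g_{N^2 h}^{\eta^k_s}(\tfrac{Nm}{q}) \Bigr).
\]

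The key manoeuvre is a rescaling. Setting $u = 2^k(t-1)$ inside the integral defining $\widehat{w_R^{\eta^k_s}}$ and factoring out $e^{2i\pi(R-\xi)}$, one obtains
\[
\widehat{w_R^{\eta^k_s}}(\xi) = 2^{-k} e^{2i\pi(R-\xi)} \int \tilde\eta_k(u)\, e^{2i\pi(\alpha u + \beta u^2)}\, du,
\]
with $\tilde\eta_k(u) := \eta(u)(1 + u 2^{-k})^{-s}$, $\alpha := 2^{-k}(2R - \xi)$, and $\beta := R\cdot 2^{-2k}$. The rescaled amplitude $\tilde\eta_k$ has support in $[1/2,2]$ and derivatives bounded uniformly in $k$, and the Jacobian supplies a crucial prefactor $2^{-k}$; consequently, stationary phase and integration by parts in the $u$-variable yield $k$-independent implicit constants.

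I would then split the sum over $m \neq 0$ according to the location of the stationary point $u_\ast = 2^k(m/(2Nqh) - 1)$. In the main region $u_\ast \in [1/2,2]$ (so $m$ lies within an interval of length $\ll Nqh \cdot 2^{-k}$ around $2Nqh$), standard stationary phase gives a remainder $\ll 2^{-k}\beta^{-3/2}$ per term, and summing the $\mathcal{O}(1 + Nqh \cdot 2^{-k})$ such contributions should match $\sqrt{q}/N^s$, the first piece in the stated error. In the far region, repeated integration by parts against the dominant linear part of the $u$-phase yields decay $2^{-k}|\alpha|^{-J}$ per term; the $2^{-k}$ prefactor together with the $m$-summation should deliver the tail $N^{1/2-s}\gamma_k$, with $\gamma_k$ a summable sequence in $k$. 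The marginal case $R = N^2 h < 1$ is handled through the $\rho$-term in Lemma \ref{fourier}. The principal obstacle is the choice of $J$ in the integration by parts: too small a $J$ makes the far-region $m$-sum diverge, while too large a $J$ sacrifices the summability of $\gamma_k$; the correct choice should interpolate between the two regimes depending on whether $2^k$ is smaller or larger than $N/q$, which is what ultimately produces a $\gamma_k$ with $\sum_k \gamma_k \ll 1$.
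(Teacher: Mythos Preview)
Your approach is essentially the paper's: the key rescaling $u=2^{k}(t-1)$, producing the $2^{-k}$ prefactor and the amplitude $\tilde\eta_k(u)=\eta(u)(1+u2^{-k})^{-s}$ with $k$-uniform derivative bounds, is exactly what the paper does (it writes this as $\widehat{w_R^{(\eta^k)_s}}(\xi)=2^{-k}e^{2i\pi(R-\xi)}\widehat{w_{R2^{-2k}}^{\tilde\eta_k}}\bigl(2^{-k}(\xi-2R)\bigr)$). From that point, however, your bookkeeping drifts from the actual argument in two respects.

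First, there is no need to tune an integration-by-parts order $J$. The paper simply applies Lemma~\ref{fourier} to the rescaled transform, obtaining for each $m\neq 0$ an error of size $2^{-k}\bigl(\rho_{R_k,\xi_k}R_k^{-1/2}+(1+R_k+|\xi_k|)^{-3/2}\bigr)$ with $R_k=2^{-2k}N^2h$ and $\xi_k=2^{-k}|Nm/q-2N^2h|$; the fixed exponent $3/2$ is already enough. Second, the summable factor $\gamma_k$ does not come from the far region. In the paper, the tail of the $(1+R_k+|\xi_k|)^{-3/2}$ sum is controlled by an integral and contributes only $\sqrt q\,N^{-s}$. The term $N^{1/2-s}\gamma_k$ arises from the \emph{single} integer $m$ closest to $2Nqh$ (for which $|Nm/q-2N^2h|<N/2q$): that contribution is bounded by $N^{1/2-s}\sqrt{u_k/(1+u_k)^3}$ with $u_k=2^{-2k}N/q$, and this specific sequence sums in $k$ uniformly in $N,q$. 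The $\rho$-term likewise contributes $N^{1/2-s}$ only for the finitely many $k$ with $2^{-k}$ in a fixed dyadic window determined by $\{2Nhq\}$. So your outline is on the right track, but you should drop the variable-$J$ discussion and relocate the source of $\gamma_k$ to the near-integer term; once you do, the argument is identical to the paper's.
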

 
 \begin{proof} The proof starts as the one of Lemma \ref{lemma37}. 
 Using the fact that for any  $ (\eta^k)_s  (t)=  \frac{\eta((t-1)/2^{-k})}{t^s} $ and that 
\begin{eqnarray*}
 \widehat {w_R^{(\eta^{k})_s}} (\xi) & =  &  \int_\R   (\eta^{k})_s  (t) e^{2i\pi(Rt^2-  t \xi)} dt= \int_\R    \frac{\eta (\frac{t-1}{2^{-k}})}{t^s}   e^{2i\pi(Rt^2-  t \xi)}    dt \\
% & = &    2^{k(s-1)}     \int_\R \frac{\eta (u -2^k)}{ u^s}  e^{2i\pi(R2^{-2k}u^2-   (2^{-k}\xi ) u)} du\\ 
%& = &    2^{k(s-1)}     \int_\R \frac{\eta (u  )}{ (u+2^{k})^s}  e^{2i\pi(R2^{-2k} (u+2^{k})^2-   (2^{-k}\xi )  (u+2^{k}))} du\\ 
& = &    2^{-k}  e^{2i\pi (  R-\xi )}    \int_\R \frac{\eta (u  )}{ (1+u2^{-k})^s}  e^{2i\pi(R2^{-2k} u   ^2-   2^{-k} ( \xi -2R   )  u )} du\\ 
 &  = &      2^{-k}     e^{2i\pi (R-\xi)}  \widehat {w_{ R 2^{-2k}}^{\widetilde {\eta^k}}} (2^{-k}(\xi-2R))  ,
\end{eqnarray*}
where $\widetilde {\eta^k} (u)= \frac{\eta (u  )}{ (1+u2^{-k})^s} $. Hence, 
\begin{eqnarray}
 \nonumber E_N^{(\eta^k)_s} \left(\frac pq+h \right)  & = &  \frac{1}{\sqrt q} \sum_{m \in \Z} \theta_m  \cdot \widehat {w_{N^2 h}^{(\eta^k)_s}} \left (\frac{Nm}{q} \right)\\
\label{eqplustard}   & = &  \frac{ 2^{-k}    }{\sqrt q} \sum_{m \in \Z} \theta_m  \cdot   e^{  2i\pi(N^2h-  \frac{Nm}{q } )}    \widehat {w_{ N^2h 2^{-2k}}^{\widetilde {\eta^k}}} \left (2^{-k} \left (\frac{Nm}{q}-2N^2h  \right) \right)  
  \end{eqnarray}
  Here we apply again Lemma \ref{fourier}
 and we obtain
  \begin{eqnarray*}
  E_N^{(\eta^k)_s} \left(\frac pq+h \right)     &=&   \frac{ 2^{-k}    }{\sqrt q}  \sum_{m\neq 0}  \theta_m e^{  2i\pi(N^2h-  \frac{Nm}{q } )}   \\
 && \times   \left(  e^{i\pi/4} {\widetilde {\eta^k}} \left(\frac{2^{-k} \left (\frac{Nm}{q}-2N^2h  \right) }{2N^2h 2^{-2k}}\right)  \frac{e^{-i  \pi \frac{2^{-2k} \left(\frac{Nm}{q}  - 2N^2h \right )  ^2}{2 N^2 h2^{-2k}}}}{\sqrt{2N^2 h 2^{-2k}}}  \right.\\ 
 && \left.+ \mathcal{O}_{\widetilde {\eta^k}} \left({\frac{\rho_{R_k,\xi_k}}{\sqrt{2^{-2k}N^2 h}}+} { \left( 1+ 2^{-k} \left|\frac{Nm}{q}  - 2N^2h \right |+ N^2h2^{-2k}\right) ^{-3/2}} \right) \right) .
\end{eqnarray*}
{with $R_k=2^{-2k}N^2 h$ and $\xi_k=2^{-k}|Nm/q-2N^2h|$.}
Finally, after simplification, one gets
\begin{eqnarray}
\nonumber F_{s,N}^{\eta^{k}} \left(\frac pq+h \right)  & =  & N^{1-s} E_N^{(\eta^k)_s} \left(\frac pq+h \right) \\
& =  &   \frac{(2qh)^{s-\frac 12}}{e^{-i\pi/4}} 
 \sum_{m \in \Z}         \frac { \theta_m  }  {  m ^s} e^{  - 2i\pi  \frac{m^2}{q^2h}   }   {  {\eta^k}}    \left(  \frac{m}{2Nhq}   \right)  \\
 && \!\!\!\! \!\!\!\!\!\!   \\
\nonumber  & =  &  \frac{  \theta_0 N^{1-s} }   {\sqrt q} \displaystyle \int_\R \frac{e^{2i\pi N^2 {  h} t^2}}{ t^s}  {  {\eta^k}} (t)dt  +G_{s,N}^{{\eta^k}} (h) +   L^k_N , 
 \end{eqnarray}
{
where by Lemmas \ref{lemma37} and \ref{lemma3.8} one has
\[
L^k_N =\mathcal{O} _{\widetilde{\eta^k}} \left( \sum_{m\in J^k_N\cap \mathbb Z^*}  \frac{2^{-k}N^{1-s}/\sqrt q}{\sqrt{2^{-2k} N^2 h} }  +\sum_{m \in \Z^*}  \frac{2^{-k}N^{1-s}/\sqrt q}{(1+ 2^{-k} |\frac{Nm}{q}  - 2N^2h|+ N^2h2^{-2k})^{3/2} }\right)\!\!,
\]
with $J^k_N= \big [(2+2^{-k-1})Nqh,(2+2^{-k+1})Nqh \big]$.

\smallskip

First, as specified in Lemma \ref{fourier}, the constants involved in the $ \mathcal{O}_{\widetilde {\eta^k}}$ depend on upper bounds for some derivatives of $\widetilde {\eta^k}$, and then by the definition of $\widetilde{\eta^k}$ we can assume they are fixed and independent on both $k$ and $s$.

\medskip
Let $\{x\}$ stand for the distance from the real number $x$ to the nearest integer.

The first sum in $L^k_N$ is bounded above by:
\begin{itemize}
\smallskip\item
 $\sqrt q N^{-s} +  N^{1/2-s}$ when  $2^{-k}\in\big [\{2Nhq\}/4Nhq,\{2Nhq\}/Nhq \big]$, 
\smallskip \item  $\sqrt q N^{-s} $ otherwise.
\end{itemize}

In particular, $x$ being fixed, the term $N^{1/2-s}$ may appear only a finite number of times when $k$ ranges in $\N$. 

In the second sum, there is at most one integer $m$ for which $|Nm/q-2N^2h|< N/2q$, and the corresponding term is bounded above by 
\begin{eqnarray*}
&&2^{-k} N^{1-s}q^{-1/2}(1+N/q 2^{-k}+ | N^2 h| ^{-2k})^{-3/2} \\
&\leq & 2^{-k} N^{1-s}q^{-1/2}(1+ N/q 2^{-2k})^{-3/2} \\
 & = & N^{1/2-s}  \frac{ N^{1/2} q^{-1/2} 2^{-k} }{(1+ N/q 2^{-2k})^{-3/2}}\\
& \leq &   N^{1/2-s} \gamma_k, 
\end{eqnarray*}
 where $\gamma_k= \sqrt{ \frac{u_k}{(1+u_k)^{3}}}$ and $u_k= 2^{-2k}N/q$. The sum over $k$ of this upper bound is finite, and  this sum can be bounded above independently on $N$ and $q$.
 
 \smallskip
 
 The rest of the sum is bounded, up to a multiplicative constant, by
 $$ \int_{u=0}^{+\infty}  \frac {\sqrt{q}N^{-s}\,(2^{-k}N/q) du}{ (1 + N^2h 2^{-2k} + 2^{-k} |\frac{Nu}{q}  - 2N^2h  | )^{3/2}}  \ll \frac{\sqrt q N^{-s}}{(1+N^2 h 2^{-2k})^{1/2}}\ll 
 \sqrt q N^{-s},$$
 hence the result.}
 \end{proof}

 Now we are ready to prove Proposition \ref{summation}.
 
 \medskip

Recall that $N\geq q$ and $0\leq h\leq q^{-1}$.  Let $K$ be the unique integer such that $2^{-K }\leq \frac{\sqrt{q}}{N}< 2^{-(K+1)}$.  We need to bound by above the sum of the errors $L^k_N$:
 \begin{itemize}
 
 \item when $k\geq K$: we use Lemma \ref{lemma3.8} to get
 $$ \sum_{ k\geq K} |L^k_N| \ll   N^{1-s} 2^K \ll  N^{1-s} \frac{\sqrt{q}}{N}  =\frac{\sqrt{q}}{N^s} \leq \frac{1}{N^{s-1/2}}. $$

 \item the remaining terms are simply bounded using Lemma \ref{lemma3.9} by
  $$ \sum_{ k=2}^{K} |L^k_N| \ll K \frac{\sqrt{q}}{N^s} {  + N^{1/2-s} } \ll    \log N   \frac{\sqrt{q}}{N^s} +N^{1/2-s}\ll \log q \frac{\sqrt{N}}{N^s} = \frac{\log q}{N^{s-1/2}},$$
where we use that the mapping $x\mapsto \frac{\sqrt{x}}{\log x}$ is increasing for large $x$.
   \end{itemize}
 
 Gathering all the informations, and  recalling that $\sum_{k=2}^{+\infty}  {\eta^k}   (t) = \psi(t-1)$,  we have that 
 $$  F^{\psi(\cdot -1) }_{s,N} \left(\frac{p}{q}+h\right) = \frac{ N^{1-s} \theta_0  }   {\sqrt q}     \int_\R \frac{e^{2i\pi {N^2 h} t^2}}{t^s} \psi(t -1) dt+   G_{s,N}^{\psi(\cdot -1)} (h) +  \mathcal{O}_{\eta}\left (  \frac{ {\log q}}{N^{{s-1/2}}}\right).$$

   The same inequalities remain true if we use the functions  $ {\widetilde {\eta^k} } = \eta((2-t)/2^{-k})$, so the last inequality also holds for $\psi(2-\cdot) $
   
   \medskip
   
   Finally, recalling the decomposition \eqref{linear} expressing  ${\bf 1 \!\!\!1}_{[1,2]}$ in terms of smooth functions, we get 
   \begin{equation}
   \label{eqFN}
     F^ {{\bf 1\!\!\!1}_{[1,2]}}_{s,N} \left(\frac{p}{q}+h\right) = \frac{ N^{1-s} \theta_0  }   {\sqrt q}     \int_1^2 \frac{e^{2i\pi {N^2 h} t^2}}{t^s} dt+   G_{s,N}^ {{\bf 1\!\!\!1}_{[1,2]}} (h) +  \mathcal{O}_{\eta}\left (  \frac{ {\log q}}{N^{{s-1/2}}}\right),
   \end{equation}
   and the result follows.
\end{proof}
%%%%%%%%%%%%%%%%%%%%%%%%%%%%%%%%%%%%%%%%%%

%%%%%%%%%%%%%%%%%%%%%%%%%%%%%%%%%%%%%%%%%%
%%%%%%%%%%%%%%%%%%%%%%%%%%%%%%%%%%%%%%%%%%
%%%%%%%%%%%%%%%%%%%%%%%%%%%%%%%%%%%%%%%%%%
%%%%%%%%%%%%%%%%%%%%%%%%%%%%%%%%%%%%%%%%%%
%%%%%%%%%%%%%%%%%%%%%%%%%%%%%%%%%%%%%%%%%%
%%%%%%%%%%%%%%%%%%%%%%%%%%%%%%%%%%%%%%%%%%
%%%%%%%%%%%%%%%%%%%%%%%%%%%%%%%%%%%%%%%%%%
%%%%%%%%%%%%%%%%%%%%%%%%%%%%%%%%%%%%%%%%%%
\section{Proof of the convergence theorem \ref{convergence}}

%%%%%%%%%%%%%%%%%%%%%%%%%%%%%%%%%%%%%%%%%%
%%%%%%%%%%%%%%%%%%%%%%%%%%%%%%%%%%%%%%%%%%
%%%%%%%%%%%%%%%%%%%%%%%%%%%%%%%%%%%%%%%%%%
\subsection{Convergence part: item (i)}
Let $x$ be such that \eqref{S<+infty} holds true.

Recall the definition \eqref{defconvergents} of the convergents of $x$. 
We begin by bounding $F_{s,M}(x)-F_{s,N}(x)$ for any 
\[
 q_j/{4}  \le N < M < q_{j+1}/4.
\]
We apply Proposition \ref{summation} with $p/q=p_j/q_j$ and $h=h_j$, so that $x=p/q+h$. {Due to (\ref{symmetry}), we can assume that $h_j>0$}. It is known that for $ \frac{1}{2q_jq_{j+1}}  \leq h_j  = |x- p_j/q_j| <  \frac{1}{q_jq_{j+1}}   $. 
%Moreover, $S(x) <\infty$ in   \eqref{S<+infty} implies that $\log q_{j+1} \ll \log q_j$.

First, since $4Nh_jq_j< 4N/q_{j+1}<1$, the sums \eqref{DEFGN} appearing in 
$  G_{s,2N}(h_j)$ and $G_{s,N}(h_j )$ have no terms, hence are equal to zero. This yields
\[
 F_{s,2N}(x)-F_{s,N}(x)= \frac{\theta_0}{\sqrt{q_j}} \ \int_N^{2N}\frac{e^{2i\pi h_j t^2}}{t^s}dt + O(N^{\frac 12 -s} \log q_j).
\]
It is immediate to check that  $\int_a^{2a} t^{-s} e^{2i\pi t^2}\,dt \ll  \min(a^{-s-1}, a^{-s+1})$, thus
\begin{eqnarray*}
 \left| \int_N^{2N}\frac{e^{2i\pi h_j t^2}}{t^s}dt\right|  & \ll &   |h_j|^{s/2-1/2} \left| \int_{N\sqrt{h_j} } ^{2N\sqrt{h_j} }\frac{e^{2i\pi u^2}}{u^s}du  \right|\\
 &\ll &   |h_j|^{ -1/2}   N  ^{-s } \min (|N \sqrt{h_j} |^{- 1}, |N \sqrt{h_j} | )  .
 \end{eqnarray*}

One deduces (using that $q_jh_j $ is equivalent to $q_{j+1}^{-1}$)  that
\[
 |F_{s,2N}(x)-F_{s,N}(x)|\ll |\theta_0|     \frac{\sqrt{q_{j+1}}}{N^s} \min (\frac{N}{\sqrt{q_j q_{j+1}}}, \frac{\sqrt{q_j q_{j+1}}}{N})+ N^{\frac 12-s} \log q_j.
\]
{Thus, by writing $F_{s,M}(x)-F_{s,N}(x)$ as a dyadic sum we have
\[
 |F_{s,M}(x)-F_{s,N}(x)| \ll |\theta_0|\delta_j\frac{\sqrt{q_{j+1}}}{(\sqrt{q_j q_{j+1}})^s} + \frac{\log q_j}{q_j^{s-1/2}}.
\]
Recalling that $\theta_0$ is equal to zero when $q_j\neq 2\times $odd, fixing an integer $j_0 \geq 1$, for any $M>N>q_{j_0}$, one has }

\[
| F_{s,M}(x)-F_{s,N}(x)| \ll \sum_{j\ge j_0, q_j\neq 2*\odd} \delta_j\frac{\sqrt{q_{j+1}}}{(\sqrt{q_j q_{j+1}})^s} + \sum_{j\ge j_0} \frac{\log q_j}{q_j^{s-1/2}} +\sum_{j\geq j_0} \frac{1}{q_{j+1}^{s-1/2}}.
\]
The second and third series always converge when $j_0\to \infty$, and the first does when $\Sigma_s (x)<\infty$.

%%%%%%%%%%%%%%%%%%%%%%%%%%%%%%%%%%%%%%%%%%
%%%%%%%%%%%%%%%%%%%%%%%%%%%%%%%%%%%%%%%%%%
%%%%%%%%%%%%%%%%%%%%%%%%%%%%%%%%%%%%%%%%%%
\subsection{Divergence part: item (ii)}
 
Let $0<\ep<1/2$ a small constant. Let $N_j=\ep q_j$ and $M_j= 2\ep \sqrt{q_j q_{j+1}}$.  Proceeding exactly as in the previous proof we get
\[
 F_{s,M_j}(x)-F_{s,N_j}(x) = \frac{\theta_0}{\sqrt{q_j}}     \int_{N_j}^{M_j}\frac{e^{2i\pi h_j t^2}}{t^s}dt  + \mathcal{O}\left( q_j^{\frac 12 -s} \log q_j  \right).
\]
Since $e^{2i\pi h_j t^2} = 1+\mathcal{O}(\ep)$ inside the integral, as soon as $q_j\neq 2*\odd$, one has
 \[
 |F_{s,M_j}(x)-F_{s,N_j}(x)| \geq \frac{|\theta_0| }{\sqrt{q_j}} \frac{M_j-N_j}{2 \cdot M_j^s} \geq |\theta_0|  \ep   \frac{2\sqrt{q_{j+1}}-\sqrt{q_j}}{2 ^{1+s} \cdot  \ep^s \cdot (q_jq_{j+1})^{s/2}}  \gg \sqrt{\frac{q_{j+1}}{(q_jq_{j+1})^s}},
\] 
which is infinitely often large by our assumption. Hence the divergence of the series.

%%%%%%%%%%%%%%%%%%%%%%%%%%%%%%%%%%%%%%%%%%
%%%%%%%%%%%%%%%%%%%%%%%%%%%%%%%%%%%%%%%%%%
%%%%%%%%%%%%%%%%%%%%%%%%%%%%%%%%%%%%%%%%%%
%%%%%%%%%%%%%%%%%%%%%%%%%%%%%%%%%%%%%%%%%%
%%%%%%%%%%%%%%%%%%%%%%%%%%%%%%%%%%%%%%%%%%
%%%%%%%%%%%%%%%%%%%%%%%%%%%%%%%%%%%%%%%%%%
%%%%%%%%%%%%%%%%%%%%%%%%%%%%%%%%%%%%%%%%%%
\section{Local $L^2$ bounds for the function $F_s$}

%
%Taking into account the dependence of $\theta_m$ on $m$, we can see the previous proposition as truncated and twisted version of the following equation for the theta function
%\[
% \theta(\frac aq+h)=e(r_{a,q}) (qh)^{-\frac 12} \theta(-\frac{\overline a}{q}-\frac{1}{q^2 h})
%\]
%for $h=h_1+ih_2$, $h_2>0$.

Further intermediary results are needed to study  the  local regularity  of $F_s$.

%%%%%%%%%%%%%%%%%%%%%%%%%%%%%%%%%%%%%%%%%%
\begin{proposition}\label{summation_substract}
Let $h>0$, $1/2<s<3/2$ and $q^2 h\ll 1$. We have
 \begin{eqnarray}
  \label{eqFsN}
  \ \ \ F_{s,N}\left(\frac pq+2h\right )-F_{s,N}\left(\frac pq+h\right) & =  & \frac{\theta_0}{\sqrt{q}}  \int_0^N \frac{e^{2i\pi 2ht^2}-e^{2i\pi ht^2}}{t^s}dt\\
  \nonumber& &  + \, G_{s,N}(2h)-G_{s,N}(h)\\
  \nonumber &&  + {  \mathcal{O} \left( |qh|^{s-1/2}  \right ). }
 \end{eqnarray}
\end{proposition}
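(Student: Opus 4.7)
The plan is to follow the proof of Proposition \ref{summation} closely, but instead of applying the single-scale estimate \eqref{eq3} twice (once at $h$, once at $2h$) and subtracting, I would invoke the refined estimate \eqref{fourier_substract} of Lemma \ref{fourier} for the difference $\widehat{w_{2R}^{\psi}}-\widehat{w_R^{\psi}}$ directly. This bound saves one power of $(1+R+|\xi|)$ compared with \eqref{eq3}, which is exactly what is needed so that the aggregate error, after summation over all dyadic frequency blocks up to $N$, collapses to $|qh|^{s-1/2}$ rather than something that grows with $N$ (as would happen if one naively telescoped the formula of Proposition \ref{summation}).

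Concretely, I would reuse the smooth dyadic decomposition of $F_{s,N}$ from the proof of Proposition \ref{summation}, reducing the problem to finitely many pieces $F_{s,M}^{\phi}$ with $M=2^k\le N$ and $\supp\phi\subset[1/2,2]$. For each piece, Poisson summation \eqref{poisson} together with \eqref{eq2} gives
\[
 F_{s,M}^{\phi}\Big(\tfrac{p}{q}+h\Big)=\frac{M^{1-s}}{\sqrt q}\sum_{m\in\Z}\theta_m\,\widehat{w_{M^2 h}^{\phi_s}}\!\Big(\tfrac{Mm}{q}\Big).
\]
Subtracting at $2h$ and $h$ term by term and applying \eqref{fourier_substract} with $R=M^2h$, $\xi=Mm/q$, each summand splits as a ``$g$-main term'' plus a refined error. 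The $m=0$ main term, after the change of variables $u=t/M$, equals $\tfrac{\theta_0}{\sqrt q}\int\phi(t/M)t^{-s}(e^{2i\pi 2ht^2}-e^{2i\pi ht^2})\,dt$; summing dyadically over $M\le N$ the bumps $\phi(\cdot/M)$ reassemble into the indicator of $(0,N]$, yielding $\tfrac{\theta_0}{\sqrt q}\int_0^N t^{-s}(e^{2i\pi 2ht^2}-e^{2i\pi ht^2})\,dt$, the contribution near $t=0$ being negligible thanks to $|e^{2i\pi 2ht^2}-e^{2i\pi ht^2}|\ll ht^2$. For $m\ne 0$, the $g$-contributions are exactly those computed in Lemma \ref{lemma37}, and summing over $M$ collapses the smooth cutoffs $\phi(m/(2Mqh))$ into the sharp cutoff $m\le\lfloor 2Nhq\rfloor$ of \eqref{DEFGN} (using $qh\ll 1$ to discard the short lower tail), producing $G_{s,N}(2h)-G_{s,N}(h)$.

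The main obstacle is the error estimate. For each pair $(M,m)$, the error in \eqref{fourier_substract} contributes, after multiplication by $M^{1-s}/\sqrt q$, a term of order
\[
 \frac{M^{1-s}}{\sqrt q}\left(\frac{M^2h}{(1+M^2h+|Mm/q|)^{5/2}}+\rho_{M^2h,Mm/q}\sqrt{M^2 h}\right).
\]
Using the hypothesis $q^2h\ll 1$, I would split the summation over dyadic $M\le N$ and over $m\in\Z$ according to whether $|Mm/q|$ is comparable to $M^2h$ or much larger. The $\rho$-term is nonzero only for scales $M$ with $M^2h\le 1$ and the few integers $m\sim Mqh$ at each such scale; the remaining geometric sum in $M$ is dominated by $M\sim q/\sqrt h$ and contributes $\ll(qh)^{s-1/2}$. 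The polynomial term produces a double geometric sum in which the dominant scale is $M\sim 1/\sqrt h$, again giving an error of order $(qh)^{s-1/2}$. This case-by-case bookkeeping between the various regimes is the technical heart of the argument, but it follows the same template as the error analysis already carried out in Proposition \ref{summation}.
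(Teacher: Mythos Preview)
Your plan is correct and coincides with the paper's proof in its essential idea: at each dyadic frequency scale one applies Poisson summation and then invokes the refined difference bound \eqref{fourier_substract} rather than \eqref{eq3}, so that the aggregate error over all scales becomes $O((qh)^{s-1/2})$ instead of growing with $N$. The paper carries this out by starting from formula \eqref{eqplustard} (the $\eta^k$--rescaled Poisson identity from Lemma~\ref{lemma3.9}), applying \eqref{fourier_substract} there, summing first over $k$ to obtain \eqref{eqFsN2} for $F_{s,N}^{{\bf 1}_{[1,2]}}$ with error $\widetilde E_N$, and then summing over the outer dyadic scales $N/2^m$; each of the three pieces of $\widetilde E_N$ is a geometric sum peaking at a single scale, yielding the claimed bound.

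The one point where the paper is more careful than your outline is the passage from smooth cutoffs to the \emph{sharp} endpoints $n\le N$ and $m\le\lfloor 2Nhq\rfloor$. A single smooth partition $\sum_{M\le N}\phi(\cdot/M)$ does not reproduce ${\bf 1}_{(0,N]}$, so your sentence ``summing over $M$ collapses the smooth cutoffs $\phi(m/(2Mqh))$ into the sharp cutoff'' hides a boundary discrepancy living at the top scale $M\sim N$, where the gain from \eqref{fourier_substract} is unavailable. This is precisely what the two-level decomposition \eqref{linear} with the edge functions $\eta^k$ is for; the paper retains it here for that reason. Your scheme goes through once you either keep that edge layer or argue separately that the smooth-to-sharp correction at the top is $O((qh)^{s-1/2})$. (Also, the dominant scale for your polynomial error is $M\sim q$, not $M\sim h^{-1/2}$, though the resulting bound $q^{5/2-s}h\ll (qh)^{s-1/2}$ is the same.)
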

%%%%%%%%%%%%%%%%%%%%%%%%%%%%%%%%%%%%%%%%%%
%%%%%%%%%%%%%%%%%%%%%%%%%%%%%%%%%%%%%%%%%%
\begin{proof}
First,  one writes  
\begin{equation}
\label{decomp}
 F_{s,N}(x) = F_{s,N}^{{\bf 1\!\!\!1}_{[0,1]}} (x)=\sum_{m\ge 1}  F_{s,  N/2^m  }^{{\bf 1\!\!\!1}_{[1,2]}}(x) .
\end{equation}

  Observe that when $N$ is divisible by 2, there may be some terms appearing twice in the preceding sum, so there is not exactly equality. Nevertheless, in this case, only a few terms are added and they do not change our  estimates. This is left to the reader.
  
  \medskip

We are going to estimate    \eqref{eqFsN} but  with $ F_{s,N}^{{\bf 1\!\!\!1}_{[1,2]}}$ and $ G_{s,N}^{{\bf 1\!\!\!1}_{[1,2]}}$ instead of $ F_{s,N} $ and $ G_{s,N} $, with an error term suitably bounded by above.
% \begin{equation}\label{bound_substract}
%  F_{s,N}^{1_{[1,2]}}(\frac pq+2h)-F_{s,N}^{1_{[1,2]}}(\frac pq+h)=\theta_0\frac{\int_N^{2N}\frac{e(2ht^2)-e(ht^2)}{t^s}dt}{\sqrt q} + G_{s,N}(2h)-G_{s,N}(h)+ O(L)
%% \end{equation}
%\[ 
% (qh)^{s-1/2} \left[     \frac{{\bf 1\!\!\!1}_{(0,1/\sqrt h]}(N)}{(1/N\sqrt h)^{\frac 32-s}}   + \frac{{\bf 1\!\!\!1}_{[1/\sqrt h,1/qh]}(N)}{(N\sqrt h)^{s}} +  \frac{{\bf 1\!\!\!1}_{[1/qh,\infty)}(N)}{(Nqh)^{1/2}} \right].
%\]
Then, using this result with $N$ substituted by $N/2^m$, and then summing over $m = 1,..., \lfloor\log_2N\rfloor$ will give the result (for $m> \lfloor \log _2 N \rfloor$, the sum $F_{s,N/2^m}^{{\bf 1\!\!\!1}_{[1,2]}}$ is empty).

 \medskip

 We start from equation \eqref{eqplustard} applied with $h$ and $2h$, {and then we apply Lemma \ref{fourier}, but this time equation (\ref{fourier_substract}) instead of (\ref{eq3})}. Let us introduce for all integers $k$  the quantity
 \begin{eqnarray}
 \label{eq33} 
 E^k_N & := &  F_{s,N}^{\eta^{k}} \left(\frac pq+2h \right)  - F_{s,N}^{\eta^{k}} \left(\frac pq+h \right)   \\ 
  \nonumber
  &&  \!\!\!\!\!\!+ \,  \frac{ \theta_0 N^{1-s}}   {\sqrt q}    \! \int_\R \frac{e^{2i\pi N^2 {  2h} t^2}-e^{2i\pi N^2 {  h} t^2}}{ t^s}  {  {\eta^k}} (t)dt \\
 \nonumber&&   \!\!\!\!\!\!+ \, G_{s,N}^{{\eta^k}} (2h) -G_{s,N}^{{\eta^k}} (h)  ,
 \end{eqnarray} 
with $\eta^k$ defined as in Proposition \ref{summation}. By the exact same computations as in Lemma \ref{lemma3.9}, one obtains
 {the upper bound 
\begin{eqnarray*}
|E^k_N| \ll \beta^k_N\sum_{m\in J_k\cap \mathbb Z^*}  \frac{2^{-k}N^{1-s}/\sqrt q}{(2^{-2k} N^2 h)^{-1/2} } +  \sum_{m \in \Z^*}  \frac{(2^{-k}N^{1-s}/\sqrt q)N^2 h 2^{-2k}}{(1 + N^2h 2^{-2k} + 2^{-k} |\frac{Nm}{q}  - 2N^2h| )^{5/2}}, \end{eqnarray*}
with $J^k_N= \big[(2+2^{-k-1})Nqh,(2+2^{-k+1})Nqh \big]$ and 
$$\beta^k_N =\ \begin{cases} \  1 &\mbox{  if }  \ 2^{-2k}N^2 h\le 1,\\  \ 0& \mbox{ otherwise.}\end{cases}$$
 Then, as at the end of the proof of Lemma \ref{lemma3.9}, since $h\ll q^{-2}$, we can bound the sums by
\[
 |E^k_N|\ll \beta^k_N 2^{-2k}N^{2-s}\frac{\sqrt h}{\sqrt q}+ \frac{ (N^{1/2-s}) \sqrt{(N/q)2^{-2k}} N^2 h 2^{-2k}}{(1+N^2 h 2^{-2k}+(N/q)2^{-2k})^{5/2}}+ \frac{(\sqrt q N^{-s})N^2h2^{-2k}}{(1+N^2 h 2^{-2k})^{3/2}} 
\]
and then adding up in $k\ge 1$ we get
\[
 \sum_{k=1}^{\infty} |E^k_N |\ll \widetilde E_N=\frac{ \sqrt{1/hq}}{N^{s}}\min(1,Nqh)+\frac{ \sqrt{N}}{N^{s}}\min(1,Nqh) + \frac{\sqrt q}{N^s} \min(1,N^2h).
\]
The same holds true for the functions     $ {\widetilde {\eta^k} } = \eta((2-t)/2^{-k})$, and for $\widetilde{\psi}$ since it is similar to $\eta^1$, so by (\ref{linear}) we finally obtain that }
 \begin{eqnarray}
  \label{eqFsN2}
  \ \ \ F_{s,N}^{{\bf 1\!\!\!1}_{[1,2]}}  \left(\frac pq+2h\right )     -F_{s,N}^{{\bf 1\!\!\!1}_{[1,2]}} \left(\frac pq+h\right) & =  & \frac{\theta_0}{\sqrt{q}}  \int_0^N \frac{e^{2i\pi 2ht^2}-e^{2i\pi ht^2}}{t^s}dt\\
  \nonumber& &  + \, G_{s,N}^{{\bf 1\!\!\!1}_{[1,2]}}(2h)-G_{s,N}^{{\bf 1\!\!\!1}_{[1,2]}}(h)\\
  \nonumber &&  + {\mathcal{O} \left( \widetilde E_N\right)}.
 \end{eqnarray}

The same holds true with $N/2^m$ instead of $N$. To get the result, using \eqref{decomp}, it is now enough to sum the last inequality  over $m = 1,...,  \lfloor \log _2 N \rfloor$. {Let us treat the first term. One has
 \begin{eqnarray*}
  \sum_{m = 1}^{  \lfloor \log _2 N \rfloor } \frac{ \sqrt{1/hq}}{N^{s}}\min(1,Nqh)   & = &  \sum_{m = 1}^{  \lfloor \log _2 N \rfloor } \frac{ \sqrt{1/hq}}{(N2^m)^{s}}\min(1,N2^mqh)  \\
  & = &  \frac{ \sqrt{1/hq}}{ N ^{s}}  \sum_{m = 1}^{  \lfloor \log _2 N \rfloor }\min(2^{ms},N2^{m(1-s)}qh)  \\
 & \leq &  \frac{ \sqrt{1/hq}}{ N ^{s}}  \sum_{m = 1}^{ +\infty }\min(2^{ms},N2^{m(1-s)}qh)  \\
 & \ll & \frac{ \sqrt{1/hq}}{ N ^{s}} 2^{Ms},
   \end{eqnarray*}
   where $M$ is the integer part of the solution of the equation $2^{Ms} = N2^{m(1-s)}qh$, i.e. $2^{M} \eqsim Nqh$. Hence the first sum is bounded above by $\frac{\sqrt{1/qh}}{(1/qh)^s}$. The other terms are treated similarly, and  finally} \eqref{eqFsN} is true with an error term bounded by above by  \[
\mathcal{O} \left (\frac{\sqrt{1/qh}}{(1/qh)^s}+ \frac{\sqrt{q}}{(h^{-1/2})^{-s}} \right)
\]
which is  $\mathcal{O} ((qh)^{s-1/2})$ on $h\ll q^{-2}$. 
\end{proof}
%%%%%%%%%%%%%%%%%%%%%%%%%%%%%%%%%%%%%%%%%%

We also need to control the $L^2$ norm of the main term.

%%%%%%%%%%%%%%%%%%%%%%%%%%%%%%%%%%%%%%%%%%
\begin{lemma}\label{L2_main}
Let $0<s\le 1$ and fix  {$0<H<1$}. Let 
$$f_{s,N}(\cdot)=     \int_0^{N} \frac{e^{2i\pi t^2(\delta + 2 \cdot)} -e^{2i\pi t^2(\delta +\cdot)}  }{t^s}dt.$$
Then for any $N>0$ , $\left\| f_{s,N}(\cdot)  \right\|_{L^2(\widetilde{\mu}_H)} \ll \min\left( H^{(s-1)/2}, H |\delta|^{(s-3)/2} \right).$
\end{lemma}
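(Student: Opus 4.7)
My strategy is to establish the two bounds
\begin{equation*}
\|f_{s,N}\|_{L^2(\widetilde\mu_H)} \ll H^{(s-1)/2} \qquad \text{and} \qquad \|f_{s,N}\|_{L^2(\widetilde\mu_H)} \ll H|\delta|^{(s-3)/2}
\end{equation*}
separately, so that the minimum yields the claim.

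Set $\phi_s(a) := \int_0^N t^{-s} e^{2i\pi a t^2}\, dt$, so that $f_{s,N}(h) = \phi_s(\delta+2h) - \phi_s(\delta+h)$. The substitution $u = t\sqrt{|a|}$ gives the homogeneity identity $\phi_s(a) = |a|^{(s-1)/2}\, I_{\mathrm{sgn}(a)}(N\sqrt{|a|})$, where $I_\pm(M) := \int_0^M u^{-s} e^{\pm 2i\pi u^2}\, du$. Because $s > 1/2$, the integrals $I_\pm$ are bounded uniformly in $M$: near $0$ the factor $u^{-s}$ is integrable, and near infinity one integrates by parts via $e^{\pm 2i\pi u^2}\, du = (\pm 4i\pi u)^{-1}\, d(e^{\pm 2i\pi u^2})$. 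Hence $|\phi_s(a)| \ll |a|^{(s-1)/2}$, and the triangle inequality gives $|f_{s,N}(h)| \ll |\delta+2h|^{(s-1)/2} + |\delta+h|^{(s-1)/2}$. Since $s-1 > -1$, the integral of $|\delta+ch|^{s-1}$ over any interval of length $O(H)$ is $\ll H^s$ uniformly in $\delta$ (the worst case is $\delta=0$, giving $\int_H^{2H} h^{s-1}\, dh \asymp H^s$). Dividing by $2H$ and taking square roots establishes the first bound.

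For the second bound one may assume $|\delta| \geq 4H$, otherwise the first bound already implies the second up to absolute constants. Under this hypothesis both $\delta+h$ and $\delta+2h$ have the same sign as $\delta$ with magnitudes $\sim |\delta|$ on $\mathcal{C}(H)$. Writing $f_{s,N}(h) = \int_{\delta+h}^{\delta+2h} \phi_s'(a)\, da$, differentiation of the scaled form yields (for $a > 0$)
\begin{equation*}
\phi_s'(a) = \frac{s-1}{2}\, a^{(s-3)/2}\, I_+(N\sqrt{a}) + \frac{N^{1-s} e^{2i\pi N^2 a}}{2a}.
\end{equation*}
The first (smooth) summand is pointwise $\ll |\delta|^{(s-3)/2}$ on $[\delta+h,\delta+2h]$, contributing $\ll H|\delta|^{(s-3)/2}$ to $|f_{s,N}(h)|$ and hence to its $L^2$-norm. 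For the second (oscillatory) summand, integrate by parts in $a$ against $d(e^{2i\pi N^2 a})/(2i\pi N^2)$; the resulting boundary expression, viewed as a function of $h$, splits (using $\tfrac{1}{\delta+ch} = \tfrac{1}{\delta}(1+O(H/|\delta|))$) into a principal piece proportional to $\tfrac{N^{-s-1}}{\delta}(e^{4i\pi N^2 h} - e^{2i\pi N^2 h})$ plus lower-order terms. The $L^2(\widetilde\mu_H)$-norm of the principal piece is controlled by the Fejer-type identity $\|e^{4i\pi N^2 h} - e^{2i\pi N^2 h}\|_{L^2(\widetilde\mu_H)} \ll \min(1, N^2 H)$, obtained by direct evaluation of $\frac{1}{2H}\int_{\mathcal{C}(H)} |e^{2i\pi N^2 h}-1|^2\, dh$.

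The main technical obstacle will be combining these estimates across the sub-regimes determined by the relative sizes of $N\sqrt{|\delta|}$ and $N^2 H$. When $N\sqrt{|\delta|} \ll 1$, Taylor expansion of $e^{\pm 2i\pi u^2}$ inside $I_\pm$ gives $\phi_s(a) = \tfrac{N^{1-s}}{1-s} + O(N^{3-s} a)$, whereby the constant cancels in the difference and $|f_{s,N}(h)| \ll H N^{3-s} \ll H|\delta|^{(s-3)/2}$ follows immediately. For larger $N$ one must iterate the integration-by-parts analysis and exploit the $h$-oscillation factor to absorb the remaining boundary contributions, which is the delicate step that forces the split into the min in the statement.
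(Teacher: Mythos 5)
Your argument for the first bound is essentially the paper's (rescale and use uniform boundedness of the truncated Fresnel integral), but splitting $f_{s,N}$ into the two separate pieces $\phi_s(\delta+2h)$ and $\phi_s(\delta+h)$ costs you the endpoint $s=1$: there $\int_0^N t^{-1}e^{2i\pi a t^2}\,dt$ diverges at $t=0$ (integrability of $u^{-s}$ near $0$ requires $s<1$, not $s>1/2$), and even the regularized version $\int_1^N$ is only $O(1+\log(1/|a|))$, which after squaring and averaging over $h$ with $|\delta+ch|\lesssim H$ leaves a spurious $\log(1/H)$ factor. The paper avoids this by keeping the difference of the two exponentials inside a single integral, so that the integrand is $O(t^{2-s}h)$ near $t=0$, rescaling by $H$ when $|\delta|<H/4$ (the frequencies $(\delta+ch)/H$ then stay bounded away from $0$ on $\CC(H)$), and treating the intermediate regime $H/4\le|\delta|\le 4H$ via the auxiliary function $D(v)=\int_1^N t^{-s}e^{2i\pi v t^2}\,dt$.

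The genuine gap is in the second bound. Your computation correctly isolates the endpoint term $\frac{N^{1-s}e^{2i\pi N^2 a}}{2a}$ of $\phi_s'(a)$, whose contribution to $\|f_{s,N}\|_{L^2(\widetilde\mu_H)}$ is of order $\frac{N^{1-s}}{|\delta|}\min(H,N^{-2})$; but you never show that this is $\ll H|\delta|^{(s-3)/2}$ --- your closing paragraph defers exactly this step. No amount of further integration by parts will close it: for $s<1$, taking $N\asymp H^{-1/2}$ and $H\ll|\delta|$, the endpoint contribution is $\asymp N^{-1-s}/|\delta|$, which exceeds $H|\delta|^{(s-3)/2}$ by the factor $(|\delta|/H)^{(1-s)/2}$, and the $L^2$ averaging gains nothing since $|e^{2i\pi N^2 h}-1|\asymp 1$ on a positive proportion of $\CC(H)$ when $N^2H\asymp 1$. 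So the estimate cannot be proved ``for any $N>0$'' as stated; it does hold once $N$ is large compared with $H^{-1}$, which is the only regime in which the paper uses it ($N\ge H^{-2}$ in Section 6). The paper's own treatment of $|\delta|>4H$ rescales by $|\delta|$, bounds $\int_0^1$ by Taylor expansion and dispatches $\int_1^{N\sqrt{|\delta|}}$ with a one-line integration by parts in which the same endpoint term is hiding; a clean fix is either to add the hypothesis $N\gg H^{-1}$ or to add $\frac{N^{1-s}}{|\delta|}\min(H,N^{-2})$ to the right-hand side and check that it is absorbed wherever the lemma is invoked. You should make one of these repairs explicit rather than leaving the ``delicate step'' open.
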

%%%%%%%%%%%%%%%%%%%%%%%%%%%%%%%%%%%%%%%%%%

%%%%%%%%%%%%%%%%%%%%%%%%%%%%%%%%%%%%%%%%%%
\begin{proof}
$\bullet$ Let us treat first the case $|\delta|<H/4$. Using a change of variable, one has
\[
f_{s,N}(h)=  H^{(s-1)/2} \int_0^{N\sqrt H} \frac{e^{2i\pi t^2\frac{\delta+2h}{H}}-e^{2i\pi   t^2\frac{\delta +h}{H}}}{t^s} \, dt.
\]
We are interested in the range  $H<h<2H$, and in this case the ratios    $\frac{\delta+2h}H$, $\frac{\delta+h}{H}$ are bounded, so that  the integral is  bounded by a constant independent of $N$. One deduces that   $\left\| f_{s,N}(\cdot)  \right\|_{L^2(\widetilde{\mu}_H)} \ll H^{(s-1)/2}$.

\medskip

$\bullet$ Assume then that $|\delta|>4H$. Assume that $\delta>0$ (the same holds true with negative $\delta$'s). Using a change of variable, one has
\[
f_{s,N}(h)= |\delta|^{(s-1)/2} \int_0^{N\sqrt{|\delta|}} \frac{ e^{2i\pi t^2 (1+\frac{2h}{\delta})}-e^{2i\pi t^2 (1+\frac{h}{\delta})}}{t^s} \,dt.
\]
The integral between 0 and 1 is clearly $\mathcal{O}(h/|\delta|)$. For the other part, one has (after integration by parts)
\begin{eqnarray*}
 \int_1^{N\sqrt{\delta}} \frac{ e^{2i\pi t^2 (1+\frac{2h}{\delta})}-e^{2i\pi t^2 (1+\frac{h}{\delta})}}{t^s} \,dt   &=& \mathcal{O} \left (    {h} / {|\delta|} \right) ,\end{eqnarray*}
so that  $|f_{s,N}(h)|\ll  H   |\delta|^{(s-3)/2}$ for any $H<h<2H$. Hence $\left\| f_{s,N}(\cdot)  \right\|_{L^2(\widetilde{\mu}_H)} \ll  H   |\delta|^{(s-3)/2}$.

\medskip

$\bullet$ It remains us to deal with the case $H/4<\delta\leq 4H$. One observes that 
$$
f_{s,N}(h)= D(\delta+2h)-D(\delta+h) + \mathcal{O}(H), \ \ \mbox{ where  } \ D(v)=\int_1^N t^{-s} e^{2i\pi vt^2} \, dt.
$$ 
 It is enough to get the bound
\[
 \int_0^H |D(v )|^2 \,dv \ll   H^s{\color{red},}
\]
which follows from the fact that  $|D(v)| \ll |v|^{(s-1)/2}$ when $s<1$ and $|D(v)| \ll 1+\log(1/|v|)$ when $s=1$.

\end{proof}
%%%%%%%%%%%%%%%%%%%%%%%%%%%%%%%%%%%%%%%%%%

Finally, the oscillating behavior of $G_{s,N}(h)$  gives us the following.

%%%%%%%%%%%%%%%%%%%%%%%%%%%%%%%%%%%%%%%%%%
\begin{proposition}\label{L2_average}
Let $0<H\leq q^{-2}$ and $|\delta|\leq \sqrt{H}/q$. Let 
$$g_{s,N}(\cdot)=   F_{s,N}\left(\frac pq+\delta +2\,\cdot \right)-F_{s,N} \left(\frac pq+\delta+\cdot\right)- \frac{ \theta_0}{\sqrt{q}} f_{s,N}(\cdot).$$
One has $\|g_{s,N}\|_{L^2(\widetilde \mu_H)} \ll  { H^{\frac{s-1/2}2} }. 
$
\end{proposition}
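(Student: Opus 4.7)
The plan is first to reduce the bound on $g_{s,N}$ to that on a difference $G_{s,N}(\delta+2h)-G_{s,N}(\delta+h)$ via a straightforward generalization of Proposition~\ref{summation_substract}, then to bound the $L^2(\widetilde\mu_H)$-norm of that difference by expanding its square and exploiting orthogonality of the quadratic phases.

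The proof of Proposition~\ref{summation_substract} works verbatim with $(h,2h)$ replaced by any pair $(h_1,h_2)$ satisfying $\max(q^2|h_i|)\ll 1$, since its only inputs are bound (\ref{fourier_substract}) of Lemma~\ref{fourier} and the smoothed decomposition (\ref{linear}), both insensitive to a common shift by $\delta$. Applying it with $(h_1,h_2)=(\delta+h,\delta+2h)$ -- for which $|h_i|\le|\delta|+2H\le\sqrt H/q+2H\ll\sqrt H/q$ under the hypotheses $|\delta|\le\sqrt H/q$ and $H\le q^{-2}$ -- yields
\[
g_{s,N}(h)=\bigl[G_{s,N}(\delta+2h)-G_{s,N}(\delta+h)\bigr]+\mathcal O\bigl((q\cdot\sqrt H/q)^{s-1/2}\bigr)=\bigl[G_{s,N}(\delta+2h)-G_{s,N}(\delta+h)\bigr]+\mathcal O(H^{(s-1/2)/2}).
\]
Since $\widetilde\mu_H$ is a probability measure, the pointwise remainder contributes $\mathcal O(H^{(s-1/2)/2})$ in $L^2(\widetilde\mu_H)$; and by the triangle inequality together with the rescaling $h\mapsto h/2$ (which gives $\|G_{s,N}(\delta+2\cdot)\|_{L^2(\widetilde\mu_H)}=\|G_{s,N}(\delta+\cdot)\|_{L^2(\widetilde\mu_{2H})}$), the task reduces to showing $\|G_{s,N}(\delta+\cdot)\|^2_{L^2(\widetilde\mu_H)}\ll H^{s-1/2}$.

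For this last inequality I would expand
\[
\int_{\CC(H)}|G_{s,N}(\delta+h)|^2\,dh=\sum_{m,m'\ge 1}\frac{\theta_m\bar\theta_{m'}}{m^s(m')^s}\int_{I_{m,m'}}\bigl(2q(\delta+h)\bigr)^{2s-1}e^{-i\pi(m^2-(m')^2)/(q^2(\delta+h))}\,dh,
\]
with $I_{m,m'}=\CC(H)\cap\{h:\max(m,m')\le\lfloor 2Nq(\delta+h)\rfloor\}$, and split into diagonal and off-diagonal contributions. On the diagonal ($m=m'$), each integrand is pointwise $\ll(3\sqrt H)^{2s-1}\ll H^{s-1/2}$ (using $|\delta+h|\le 3\sqrt H/q$), and the convergent sum $\sum_m|\theta_m|^2/m^{2s}\ll 1$ (for $s>1/2$) then yields a total diagonal contribution $\ll H\cdot H^{s-1/2}=H^{s+1/2}$. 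For the off-diagonal ($m\ne m'$), integration by parts with phase derivative $|\phi'(h)|=\pi|m^2-(m')^2|/(q^2(\delta+h)^2)$ bounds both boundary and interior terms by $\ll H^{s+1/2}/|m^2-(m')^2|$ (using $|\delta+h|\le 3\sqrt H/q$ and $q^{2s+1}H^{s+1/2}\le 1$, which follows from $H\le q^{-2}$); summing the convergent series $\sum_{m\ne m'}(m^s(m')^s|m^2-(m')^2|)^{-1}\ll 1$ (which one verifies by comparison with $\sum_m\log m/m^{2s+1}$) gives a total off-diagonal contribution also $\ll H^{s+1/2}$. Dividing by $H$ completes the proof.

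The main technical obstacle is the $h$-dependent truncation in $G_{s,N}$: the integration-by-parts step must account for jumps of the indicator ${\bf 1\!\!1}_{m\le\lfloor 2Nq(\delta+h)\rfloor}$, at which extra boundary terms appear; however, each such extra boundary contribution is of the same size $\ll H^{s+1/2}/|m^2-(m')^2|$ as those from the endpoints of $\CC(H)$, so the bound remains uniform in $N$, as the proposition requires.
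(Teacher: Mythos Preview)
Your argument is correct and follows the same overall strategy as the paper: reduce via Proposition~\ref{summation_substract} to the bound $\|G_{s,N}(\delta+\cdot)\|_{L^2(\widetilde\mu_H)}\ll H^{(s-1/2)/2}$, expand the square, and separate diagonal from off-diagonal contributions using the oscillation of the phase $e^{-i\pi(m^2-(m')^2)/(q^2(\delta+h))}$. Two execution details differ, and the comparison is instructive. First, for the reduction, the paper does not reopen Proposition~\ref{summation_substract}; it applies it as stated to the dyadic telescoping sequences $h_n=2^{-n}(\delta+h)$ and $\widetilde h_n=2^{-n}(\delta+2h)$ and sums, obtaining the same identity $g_{s,N}(h)=G_{s,N}(\delta+2h)-G_{s,N}(\delta+h)+\mathcal O\bigl((q(|\delta|+|h|))^{s-1/2}\bigr)$ without checking that the estimate (\ref{fourier_substract}) extends from the pair $(R,2R)$ to general pairs. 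It does extend (same mean-value argument), but ``verbatim'' is an overstatement; the paper's telescope sidesteps this entirely. Second, for the $L^2$ bound on $G_{s,N}$, the paper first treats the case $|\delta|\ge 3H$, where $\delta+h\asymp\delta$ on the support of $\mu_H$ and the oscillatory-integral estimate is clean, and then reduces $|\delta|<3H$ to that case by a further dyadic decomposition of $[0,5H]$. Your uniform integration by parts avoids that two-step reduction and is slightly more economical; the only point requiring care (which you do not make explicit) is that for $|\delta|<3H$ the quantity $\delta+h$ can be small on $\CC(H)$, but since both the boundary term $q^{2s+1}(\delta+h)^{2s+1}/|m^2-(m')^2|$ and the derivative $q^{2s+1}(\delta+h)^{2s}/|m^2-(m')^2|$ carry positive exponents in $\delta+h$, the smallness is harmless and the bound $\ll H^{s+1/2}/|m^2-(m')^2|$ survives.
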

%%%%%%%%%%%%%%%%%%%%%%%%%%%%%%%%%%%%%%%%%%

%%%%%%%%%%%%%%%%%%%%%%%%%%%%%%%%%%%%%%%%%%
\begin{proof}
We consider $\mu_H = (\widetilde \mu_H)_{|\R^+}$. 
 By (\ref{symmetry}), it is enough to treat the case $\delta+h>0$ and $\delta+2h>0$. Proposition \ref{summation_substract}, applied successively with $h_n:=2^{-n}(\delta +{h})$ and $\widetilde h_n:=2^{-n}(\delta +{h/2})$, and summing over $n\geq 0$, we get that
 \begin{eqnarray}
  \label{eqFsN3}
g_{s,N}(h)  = G_{s,N}(\delta+2h)-G_{s,N}(\delta+h)   + O\left( {(q(|\delta|+|h))^{s-1/2} } \right ).
 \end{eqnarray}
Thus, {since $q(|\delta|+|h|)\ll \sqrt{H}$,} it is enough to show that
\begin{equation}
\label{delta_bound}
 \|G_{s,N}(\delta + \cdot)\|_{L^2(\mu_H)} \ll H^{\frac{s-1/2}2}  .
 \end{equation}
Assume first that $|\delta|\geq 3H$. By expanding the square and changing the order of summation, and using that $\delta+2H \leq 2|\delta|$,  we have  for some  $c_{n,m}\ge 0$ 
\begin{eqnarray*}
\left \|G_{s,N}(\delta+\cdot)\right\|^2_{L^2(\mu_H)} \!\!\!   &  \ll &  \!\!\! (q|\delta|)^{2s-1}  { \sum_{  n,m=1}^{2 \lfloor2 N|\delta| q \rfloor} \frac{|\theta_m|}{m^s} \frac{|\theta_n|}{n^s} \left  |\int_{\delta+H{ +c_{n,m}} }^{\delta+2H} e^{2i\pi \frac{n^2-m^2}{q^2 h}}      \, \frac{dh}{H}   \right|} \\
 &  \ll &  (q|\delta|)^{2s-1}   \sum_{  n,m=1}^{ { 2}\lfloor2 N|\delta| q \rfloor} \frac{|\theta_m|}{m^s} \frac{|\theta_n|}{n^s} \left  |\int_{\delta+H{} }^{\delta+2H}  {e^{2i\pi \frac{n^2-m^2}{q^2 h}}   \, \frac{dh}{H} }  \right|.
\end{eqnarray*}
 Since for $|M|\geq 1$ and $0<\ep\ll 1$
\[
 \int_1^{1+\ep} e^{2i\pi \frac Mt}\,dt\ll \frac{1}{|M|},
\]
 the previous sum is bounded above by 
\[
{(q|\delta|)^{2s-1}}\left[ \sum_{m\ge 1} \frac 1{m^{2s}} + { \frac{|\delta|}{H} }  q^2 |\delta|\sum_{m\ge 1} \frac{1}{m^{1+s}} \sum_{j\ge 1} \frac 1{j^{1+s}} \right],
\]
with $j=|n-m|$. The term between brackets is bounded by a universal constant (since  {$q^2\delta^2 /H\leq 1$)}, hence \eqref{delta_bound} holds true. It is immediate that the same holds true   with $(\widetilde \mu_H)_{|\R^-}$.

Further,  assume that $  |\delta|<3H$. Setting $H_k=2^{-k}H$,  one has 
\[
 \left\|G_{s,N}(\delta +\cdot) \right\|_{L^2(\widetilde \mu_H)}^2 \leq \int_0^{5H} |G_{s,N}(h)|^2 \frac{dh}{H} \leq \sum_{k\ge -2} 2^{-k} \| G_{s,N}(\cdot) \|_{L^2(\mu_{H_k})}^2.
\]

Now,  observing that $[H_k,H_{k-1}] \subset 3H_k + \left([-2H_k,-H_k]\cup[H_k,2H_k]\right)$, one can apply  \eqref{delta_bound} with  $H=H_k$ and $\delta=3H_k$ to get
$$ \| G_{s,N}(\cdot) \|_{L^2(\mu_{H_k})}^2 \leq  \|G_{s,N}(\delta_k + \cdot)\|_{L^2(\mu_{H_k})} \leq  H_k^{\frac{s-1/2}2} =H^{\frac{s-1/2}2} 2^{-k \frac{s-1/2}2} .$$
Summing over $k$ yields the result.
\end{proof}
%%%%%%%%%%%%%%%%%%%%%%%%%%%%%%%%%%%%%%%%%%

%%%%%%%%%%%%%%%%%%%%%%%%%%%%%%%%%%%%%%%%%%
%%%%%%%%%%%%%%%%%%%%%%%%%%%%%%%%%%%%%%%%%%
%%%%%%%%%%%%%%%%%%%%%%%%%%%%%%%%%%%%%%%%%%
%%%%%%%%%%%%%%%%%%%%%%%%%%%%%%%%%%%%%%%%%%
%%%%%%%%%%%%%%%%%%%%%%%%%%%%%%%%%%%%%%%%%%
%%%%%%%%%%%%%%%%%%%%%%%%%%%%%%%%%%%%%%%%%%
%%%%%%%%%%%%%%%%%%%%%%%%%%%%%%%%%%%%%%%%%%
\section{Proof of Theorem \ref{smoothness}}

%%%%%%%%%%%%%%%%%%%%%%%%%%%%%%%%%%%%%%%%%%
%%%%%%%%%%%%%%%%%%%%%%%%%%%%%%%%%%%%%%%%%%
%%%%%%%%%%%%%%%%%%%%%%%%%%%%%%%%%%%%%%%%%%
\subsection{Lower bound for the local $L^2$-exponent $\alpha_{F_s}$}

Assume that $\Sigma_s (x)<\infty$ (see equation \eqref{S<+infty}), so that the series $F_{s,N}(x)$ converges to $F_s(x)$. Recall that $p_j/q_j$ stands for the partial quotients of $x$.

\medskip 

Pick $N$ such that $0\leq |F_s(x)-F_{s,N}(x)|<H$ and  $N^{\frac 12-s}\leq H^2$. Since
\begin{eqnarray*}
 \left\|{F_{s}}(x+\cdot)-F_{s,N}(x+\cdot)  \right\|_{L^2(\widetilde\mu_H)} & \le & \frac{\|{F_{s}}(x+\cdot)-F_{s,N}(x+\cdot)\|_{L^2([0,1])}}{H/2} \\
  & \ll & \frac{N^{\frac 12-s}}{H}  \leq H,
\end{eqnarray*}
and  since one has
\[
 {F_{s}}(x+\cdot)-{F_{s}}(x)={F_{s}}(x+\cdot)-F_{s,N}(x+\cdot)+F_{s,N}(x+\cdot)-F_{s,N}(x)+F_{s,N}(x)-{F_{s}}(x),
\]
one deduces that
\[
 \|{F_{s}}(x+\cdot)-{F_{s}}(x)\|_{L^2(\widetilde\mu_H)}=\|F_{s,N}(x+\cdot)-F_{s,N}(x)\|_{L^2(\widetilde\mu_H)}+\mathcal{O}(H).
\]
Thus, it is enough to take care of the local $L^2$-norm of $   F_{s,N}(x+h)-F_{s,N}(x)$. One has 
\begin{eqnarray}
\nonumber
 \|{F_{s,N}(x+h)-F_{s,N}(x)}\|_{L^2(\widetilde\mu_H)}&\le &\sum_{k\ge 1} \left \|F_{s,N}(x+2\frac{\cdot}{2^k})-F_{s,N}(x+\frac{\cdot}{2^k}) \right\|_{L^2(\widetilde\mu_H)}\\
 \label{dyadic_L2}
&\leq & \sum_{k\ge 1} \left \|F_{s,N}(x+2\,\cdot)-F_{s,N}(x+\cdot) \right\|_{L^2(\widetilde\mu_{H_k})},
\end{eqnarray}
where $H_k = H2^{-k}$. Let us introduce the function $f(h) =F_{s,N}(x+2\, h )-F_{s,N}(x+h)$.

Let  $j_H$ be the smallest integer such that $q_j^{-2} \leq H$. For every $k\geq 1$,    and let $j$ be the unique integer such that $q_{j+1}^{-2}\leq H_k < q_j^{-2}$ (necessarily $j\geq j_H-1$). Using that $|x-p_j/q_j|  = |h_j|\leq q_j^{-2}$, one sees that
\[
\left \|f \right\|_{L^2(\widetilde\mu_{H_k})}=  \left  \|F_{s,N} \left  (\frac{p_j}{q_j}+h_j+2\,\cdot \right )-F_{s,N} \left  (\frac{p_j}{q_j}+h_j+\cdot\right ) \right \|_{L^2(\widetilde\mu_{H_k})}.
\]
Since $ | h_j|  <1/q_jq_{j+1}\leq \sqrt{H_k}/q_j$, we can apply Proposition \ref{L2_average} and Lemma \ref{L2_main}  with $H_k$ and $\delta = h_j$ to get
\begin{eqnarray*}
 \|f\|_{L^2(\widetilde\mu_{H_k})} & \ll &  H_k^{\frac{s-1/2}2} +\frac{|\theta_0|}{\sqrt{q_j}}  \min\left( H_k^{(s-1)/2}, H_k |h_j|^{(s-3)/2} \right)\\
 & \ll &  H_k^{\frac{s-1/2}2} +\frac{|\theta_0|}{\sqrt{q_j}} H_k^{(s-1)/2}  \min\left( 1,  \left| \frac{h_j}{H_k}\right|^{(s-3)/2} \right).
 \end{eqnarray*}

In order to finish the proof we are going to consider three different cases: 
 
 \medskip
 
 {\bf (1)  $s-1+1/2r_{\odd}(x)>0$:}  Since $h_j=q_j^{-r_j}$ we have
 \[
  \|f\|_{L^2(\widetilde\mu_{H_k})} \ll  H_k^{\frac{s-1/2}2} +|\theta_0| H_k^{(s-1)/2}  \min\left( |h_j|^{\frac 1{2r_j}},   \frac{{H_k}^{(3-s)/2}}{|h_j|^{\frac{3-s}2-\frac 1{2r_j}}} \right),	
 \]
and optimizing in $|h_j|$ we get
\[
  \|f\|_{L^2(\widetilde\mu_{H_k})} \ll  H_k^{\frac{s-1/2}2} +|\theta_0| H_k^{\frac{s-1+1/2r_j}2}\ll H_k^{(s-1+1/2r_{\odd}(x)+o(H_k))/2} 
\]
by the definition of $r_{\odd}(x)$. Adding up in $k$ finishes the proof in this case.
 
\medskip

 {\bf (2)  $s-1+1/2r_{\odd}(x)=0$ and $s=1$:} In this case it is enough to show that $\sum_{k\ge 1} \|f\|_{L^2(\widetilde\mu_{H_k})}<\infty$. We have 
 \[
 \|f\|_{L^2(\widetilde\mu_{H_k})} \ll  H_k^{\frac{s-1/2}2}+ \frac{|\theta_0|}{\sqrt{q_j}}
 \]
which implies
\[
 \sum_{q_{j+1}^{-2}\le H_k \le q_j^{-2}}\|f\|_{L^2(\widetilde\mu_{H_k})} \ll  \sum_{q_{j+1}^{-2}\le H_k \le q_j^{-2}} H_k^{\frac{s-1/2}2}  + \frac{|\theta_0|}{\sqrt{q_j}}\log(q_{j+1}/q_j).
\]  
This yields
\[
 \sum_{k\ge 1}\|f\|_{L^2(\widetilde\mu_{H_k})}\ll H^{\frac{s-1/2}2} + \sum_{j: \, q_j\neq 2*\odd} \frac{1}{\sqrt{q_j}} \log\frac{q_{j+1}}{q_j} \ll 1+\Sigma_s (x)<+\infty.
\]

\medskip 
 {\bf (3)  $s-1+1/2r_{\odd(x)}=0$ and $s<1$:} Since $h_j\asymp 1/q_jq_{j+1}$, we have 
\[
 \|f\|_{L^2(\widetilde\mu_{H_k})} \ll  H_k^{\frac{s-1/2}2} +\frac{|\theta_0|}{\sqrt{q_j}}   \min\left( H_k^{(s-1)/2},  \frac{H_k}{(q_jq_{j+1})^{(s-3)/2}} \right),
\]
so
\[
 \sum_{q_{j+1}^{-2}\le H_k \le q_j^{-2}}\|f\|_{L^2(\widetilde\mu_{H_k})} \ll (\sum_{q_{j+1}^{-2}\le H_k \le q_j^{-2}} H_k^{\frac{s-1/2}2}) + \frac{|\theta_0|}{\sqrt{q_j}} (\frac{1}{q_jq_{j+1}})^{(s-1)/2}.
\]
Finally,
\[
 \sum_{k\ge 1} \|f\|_{L^2(\widetilde\mu_{H_k})}\ll H^{\frac{s-1/2}2} +\sum_{j,q_j\neq 2*\odd} \sqrt{\frac{q_{j+1}}{(q_jq_{j+1})^s}}\ll 1+\Sigma_s(x)<\infty.
\]

%%%%%%%%%%%%%%%%%%%%%%%%%%%%%%%%%%%%%%%%%%
%%%%%%%%%%%%%%%%%%%%%%%%%%%%%%%%%%%%%%%%%%
%%%%%%%%%%%%%%%%%%%%%%%%%%%%%%%%%%%%%%%%%%
\subsection{Upper bound for the local $L^2$-exponent}

Assume first that $s<1$.

\medskip

 {Let $K$ be a large constant. Let $0<H\le (1/K) q^{-2}$,} with $q\neq 2*\odd$ and $N>H^{-2}$. We apply Propositions \ref{summation_substract} and \ref{L2_average} to get 
\[
\left \|F_{s,N}\left(\frac pq+2\cdot\right)-F_{s,N}\left(\frac pq+\cdot\right) \right\|_{L^2(\widetilde\mu_H)}=\frac{|\theta_0|}{\sqrt q}\left\|{\widetilde F_{s}}(\cdot)\right\|_{L^2(\widetilde\mu_H)}+\mathcal{O}({H^{\frac{s-1/2}2}})
\]
with
\[
 \widetilde{F_{s}}(h)= \int_0^N \frac{e^{4i\pi ht^2}-e^{2i\pi ht^2}}{t^s} \, dt.
\]
Using a change of variable, and then after integrating by parts, one obtains 
\begin{eqnarray*}
 \widetilde{F_{s}}(h) & = & h^{\frac{s-1}2} \int_0^{N\sqrt h} \frac{e^{4i\pi t^2}-e^{2i\pi t^2}}{t^s} \, dt \\
 & = &  h^{\frac{s-1}{2}} \left((2^s-1)\int_0^{+\infty} \frac{e^{2i\pi t^2}}{t^s}dt  + \mathcal{O}\left((N\sqrt{|h|})^{-s-1} \right)\right ).
\end{eqnarray*}

It is easily checked that $\int_0^{+\infty} \frac{e^{2i\pi t^2}}{t^s}dt $ is not zero. This leads us to  the estimate 
\[
 \left\|   \widetilde {F_{s}}(\cdot)   \right\|_{L^2(\widetilde\mu_H)} = C_s   H^{\frac{s-1}2}  \, \left  (1+ \mathcal{O}(H) \right) 
 \]
for some non-zero constant $C_s$. Since  {$0<H\le q^{-2}/ K$}, we deduce that
\begin{equation}\label{lower_bound_rational}
\left \|F_{s,N}\left(\frac pq+2\cdot\right)-F_{s,N}\left(\frac pq+\cdot\right) \right\|_{L^2(\widetilde\mu_H)}\geq  \frac{H^{\frac{s-1}2}}{\sqrt q} 
\end{equation}
when $H$ becomes small enough.

\medskip

Now, pick a convergent $p_j/q_j$   of $x$ with $q_j\neq 2*\odd$, and take $H_j= (1/K) |h_j|$.  One can check that 
$$ H_j  \leq (1/K) \frac{1} {q_j q_{j+1}} \le (1/K) \frac{1} {q_j ^2} .$$
 Then, we   apply \eqref{lower_bound_rational} to obtain that  for every $N\geq H_j^{-2}$, one has
\begin{eqnarray*}
\left \|F_{s,N}\left(\frac {p_j}{q_j}+2\cdot\right)-F_{s,N}\left(\frac {p_j}{q_j}+ \cdot \right) \right\|_{L^2(\widetilde\mu_{H_j})}  & \geq  & \frac{H_j^{\frac{s-1}2}}{\sqrt{q_j}}  = H_j^{\frac{s-1}2} h_j^{1/(2r_j)}\gg H_j^{\frac{s-1+1/r_j}2}.
\end{eqnarray*}

On the other hand, by the triangular inequality, 
\begin{eqnarray*}
\left |F_{s,N}\left(\frac {p_j}{q_j}+2h\right)-F_{s,N}\left(\frac {p_j}{q_j}+h\right)\right |  & \leq &  \left| F_{s,N}\left (\frac {p_j}{q_j}+2h\right)-F_{s,N}(x)\right|\\
 & +&\left|F_{s,N}\left(\frac {p_j}{q_j}+h\right)-F_{s,N}(x)\right|,
\end{eqnarray*}
which implies that   for $\widetilde H_j = H_j$ or $\widetilde H_j = 2H_j$, one has
 \begin{eqnarray*}
\left\|F_{s,N}(x+\cdot)-F_{s,N}(x)   \right\|_{L^2(\widetilde\mu_{\widetilde H_j})}  & \geq &  \frac{1}{2}  \left \|F_{s,N}  \left(\frac {p_j}{q_j}+2\cdot\right)-F_{s,N}\left(\frac {p_j}{q_j}+\cdot\right)   \right\|_{L^2(\widetilde\mu_{H_j})}\\
&\gg &   H_j^{\frac{s-1+1/r_j}2} 
\end{eqnarray*}

Now, we can choose $N$ so large that 
$$\left\|F_{s,N}(x+\cdot)-F_{s,N}(x)   \right\|_{L^2(\widetilde\mu_{\widetilde H_j})} = \left\|F_{s}(x+\cdot)-F_{s}(x)   \right\|_{L^2(\widetilde\mu_{\widetilde H_j})} +{\mathcal{O}(\widetilde H_j),}$$
and we finally obtain
$$\left\|F_{s}(x+\cdot)-F_{s}(x)   \right\|_{L^2(\widetilde\mu_{\widetilde H_j})}  \gg   \widetilde H_j^{\frac{s-1+1/r_j}2}.$$

Since this occurs for an infinite number of $j$, i.e. for an infinite number of small real numbers $\widetilde H_j$ converging to zero, one concludes that 
$$\alpha_{F_s}(x)  \leq \liminf_{j\to+\infty} \frac{s-1+1/r_j}2 = \frac{s-1+1/r_{odd}(x)}2.$$


\begin{thebibliography}{10}

\bibitem {CZ} A.P. Calderon, A. Zygmund, Local properties of solutions of ellipitc partial differential equations, {\em Studia Math.} 20, 171--227 (1961).



\bibitem{chamizo_ubis_gaps}
F. Chamizo and A. Ubis, \emph{Some Fourier Series with gaps},  J. Anal. Math. 101   179--197 2007.

\bibitem{chamizo_ubis_polynomial}
F. Chamizo and A. Ubis, \emph{Multifractal Behaviour of Polynomial Fourier Series}, Preprint, 2012.

\bibitem{Dui}  {J.J. Duistermaat}, Self-similarity of ``Riemann's nondifferentiable function'', {\em{Nieuw Arch. Wisk. }}  4(9)   no. 3, 303-337, 1991

\bibitem{gerver} J. Gerver, The differentiability of the Riemann function at certain rational multiples of $\pi$. \textit{Amer. J. Math.} {92} 33-55, 1970.

\bibitem{Hardy}  G.H. Hardy, Weierstrass's non-differentiable function. \textit{Trans. Amer. Math. Soc.}  {17} 301-325, 1916.

\bibitem{HL}  G.H. Hardy, J.E. Littlewood, Some problems of Diophantine approximation, \textit{Acta Math.}  {37}   193-239, 1914.

\bibitem{Itatsu} {S. Itatsu}, Differentiability of Riemann's function, \textit{Proc. Japan Acad. Ser. A Math. Sci.} \textbf{57}   10, 492-495, 1981


\bibitem{iwaniec_kowalski}
H. Iwaniec, E. Kowalski, \emph{ Analytic number theory}, AMS Colloq. Pub. 53. AMS Providence, RI, 2004.  

\bibitem{jaffard}
S. Jaffard, \emph{ The spectrum of singularities of Riemann's function},   Rev. Mat. Iberoamericana 12 (2) 441--460, 1996. 

\bibitem {JM} S. Jaffard, C. Melot, Wavelet analysis of fractal Boundaries, Part 1: Local regularity, Commun. Math. Phys. 258 (3) 513--539, 2005.
 
\bibitem{rivoal_seuret}
T. Rivoal and S. Seuret, \emph{Hardy-Littlewood Series and even continued fractions}. To appear in J. Anal. Math, 2014.

\bibitem{stein}
E. M. Stein, \emph{Harmonic Analysis: Real-variable methods, Orthogonality and Oscillatory Integrals}.  Princeton Math.  Series, 43. Monographs in Harm. Analysis, III. Princeton Univ. Press, Princeton, NJ, 1993.


\end{thebibliography}
\end{document}